\documentclass[12pt,leqno]{amsart}
\usepackage[dvips]{graphics}
\usepackage{amssymb}


\numberwithin{equation}{section}

\newtheorem{thm}{THEOREM}[section]

\newtheorem{lem}[thm]{Lemma}
\newtheorem{cor}[thm]{Corollary}

\newtheorem{quest}[thm]{PROBLEM}

 \theoremstyle{definition}
\newtheorem{defn}{Definition}[section]
\theoremstyle{remark}
\newtheorem{rem}{Remark}[section]

\newcommand{\tref}[1]{Theorem~\ref{#1}}
\newcommand{\cref}[1]{Corollary~\ref{#1}}

\newcommand{\lref}[1]{Lemma~\ref{#1}}

\newcommand{\diam}{\mathrm{diam}}

\newcommand{\R}{\mathbb{R}}
\newcommand{\C}{\mathbb{C}}
\newcommand{\J}{\mathbf{J}}

\newcommand{\Area}{\operatorname{Area}}

\newcommand{\md}{\operatorname{md}}

\newcommand{\ap}{\operatorname{ap}}
\newcommand{\apmd}{\ap\md}

\newcommand{\I}{\mathcal{I}}

\pagestyle{plain}

\begin{document}
\pagebreak


\title{Energy  and area minimizers in metric spaces}

\author{Alexander Lytchak}

\address
  {Mathematisches Institut\\ Universit\"at K\"oln\\ Weyertal 86 -- 90\\ 50931 K\"oln, Germany}
\email{alytchak@math.uni-koeln.de}

\author{Stefan Wenger}

\address
  {Department of Mathematics\\ University of Fribourg\\ Chemin du Mus\'ee 23\\ 1700 Fribourg, Switzerland}
\email{stefan.wenger@unifr.ch}

\date{\today}

\thanks{The second author was partially supported by Swiss National Science Foundation Grant 153599}

\begin{abstract}
 We  show that  in the setting of proper  metric spaces one obtains a
solution of the classical two-dimensional  Plateau problem by minimizing
the energy, as in the classical case, once a definition of area (in the sense of convex geometry) has been chosen appropriately.
We prove the quasi-convexity of this new  definition of area. Under the assumption of a quadratic isoperimetric inequality we establish  regularity results for  energy minimizers
and improve Hoelder exponents of some area-minimizing discs.
 \end{abstract}

\maketitle

\maketitle
\renewcommand{\theequation}{\arabic{section}.\arabic{equation}}
\pagenumbering{arabic}
\section{Introduction}
\subsection{Motivation}
The classical Plateau problem concerns the existence and properties of  a disc of smallest area bounded by a given Jordan curve.
In a Riemannian manifold $X$, a solution of the Plateau problem  is obtained by a disc of minimal energy, where one minimizes over the set $\Lambda (\Gamma ,X)$ of  all maps  $u$ in the Sobolev space
$W^{1,2} (D,X)$, whose boundary $tr(u):S^1\to X$ is a reparametrization of the given Jordan curve  $\Gamma$.  This approach has the additional useful feature that the area minimizer obtained in this way is automatically conformally parametrized.

Recently, the authors of the present article generalized the classical
Plateau problem to the setting of arbitrary proper metric spaces in
 \cite{LW}. In particular, they proved existence of area minimizing discs
with prescribed boundary in any proper metric space and with respect to
any quasi-convex definition of area (in the sense of convex geometry).
It should be noted that the classical approach (described above) to the
Plateau problem cannot work literally in the generality of metric
spaces. This is due to the fact that there are many natural but
different definitions of area and of energy. Moreover, different
definitions of area may give rise to different minimizers as was shown
in  \cite{LW}. Finally, the presence of normed spaces destroys any hope of
obtaining a conformal area minimizer and the inevitable lack of
conformality  is the source of difficulties when trying to compare or
identify minimizers of different energies and areas.

One of the principal aims of the present article is to show   that
the classical approach  (of minimization of the area via the simpler
minimization of the energy) does in fact work for some definitions of
energy and area. As a byproduct  we obtain new definitions of area which
are quasi-convex  (topologically semi-elliptic
in the language of \cite{Iva09}), which might be of some  independent
interest in convex geometry.

\subsection{Energy and area minimizers}
For a metric space $X$, the Sobolev space   $W^{1,2} (D,X)$ consists of all
measurable, essentially separably valued maps $u:D\to X$   which admit  some function $g\in L^2 (D)$ with the following property (cf. \cite{Res97}, see also   \cite{HKST15}): For  any $1$-Lipschitz function $f:X\to \R$ the composition $f\circ u$ lies in the classical Sobolev space $W^{1,2} (D )$, and the norm of the gradient of $f\circ u$ is bounded from above by $g$ at almost every point of $D$.  In $L^2(D)$ there exists   a unique minimal function $g$ as above, called the  \emph{generalized gradient}  of $u$. This generalized gradient $g_u$ coincides with the \emph{minimal weak upper gradient}  of a representative of $u$ in the sense of \cite{HKST15}.    The square of the $L^2$-norm of this \emph{generalized gradient} $g_u$ is the  Reshetnyak energy of $u$, which we denote by $E_+^2 (u)$.
 A different but equivalent definition of the Sobolev space $W^{1,2} (D,X)$ is due to Korevaar-Schoen (\cite{KS93}) and comes along with another definition of energy $E^2(u)$  generalizing the classical  Dirichlet energy.

 If $X$ is a Riemannian manifold then $g_u (z)$  is just the point-wise sup-norm of the weak differential $Du (z)$  for almost all $z\in D$. The Dirichlet-Korevaar-Schoen energy  $E^2 (u)$ is obtained in this case by integrating over $D$ the sum of squares of eigenvalues of $Du(z)$.
 It is the heart of the classical approach to Plateau's problem by Douglas and Rado, extended by Morrey to Riemannian manifolds,  that any minimizer of the Dirichlet energy $E^2$ in $\Lambda (\Gamma , X)$ is conformal and minimizes the area in $\Lambda (\Gamma, X)$.

 Turning to general proper metric spaces $X$, we recall from \cite{LW} that for any Jordan curve $\Gamma$ in  $X$ one can
 find  minimizers of $E^2$ and $E^2_+$  in the set $\Lambda (\Gamma, X)$,  whenever  $\Lambda (\Gamma, X)$  is not empty.
 The first special case of our main result \tref{thmmain}  identifies any  minimizer of the Reshetnyak energy $E_+^2$  in $\Lambda (\Gamma, X)$ as a minimizer of the  inscribed Riemannian area $\mu^i$ investigated by Ivanov in \cite{Iva09}, see also Subsections~\ref{subsecarea} -- \ref{vergleich} below.

\begin{thm}  \label{thm2}
Let $\Gamma$ be   any  Jordan curve  in a proper metric space $X$. Then every map $u\in \Lambda (\Gamma, X)$ which minimizes the Reshetnyak
energy $E_+ ^2$ in  $\Lambda (\Gamma ,X)$ also minimizes the $\mu^i$-area  in $\Lambda (\Gamma ,X)$.
\end{thm}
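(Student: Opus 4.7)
The plan is to deduce the theorem from two ingredients: a universal pointwise comparison $\mu^i\text{-Area}\le E_+^2$ valid on all of $W^{1,2}(D,X)$, and a reparametrization lemma that converts $\mu^i$-area back into Reshetnyak energy. Granted both, if $u$ is an $E_+^2$-minimizer and $v\in\Lambda(\Gamma,X)$ is arbitrary, pick $\tilde v=v\circ\phi\in\Lambda(\Gamma,X)$ with $E_+^2(\tilde v)\le\mu^i\text{-Area}(v)+\varepsilon$; then
$$
\mu^i\text{-Area}(u)\le E_+^2(u)\le E_+^2(\tilde v)\le \mu^i\text{-Area}(v)+\varepsilon,
$$
and letting $\varepsilon\to 0$ proves the theorem.

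For the universal inequality I would prove the pointwise seminorm bound $\mu^i(s)\le g(s)^2$, where $g(s):=\max_{|v|=1}s(v)$. This is purely convex-geometric: since $s\le g(s)|\cdot|$, the Euclidean disc of radius $1/g(s)$ lies inside $B_s=\{s\le 1\}$, so the John ellipse $E\subseteq B_s$ that defines $\mu^i(s)$ has Lebesgue area at least $\pi/g(s)^2$; writing $a\ge b$ for its semi-axes, $ab\ge 1/g(s)^2$ and $\mu^i(s)=1/(ab)\le g(s)^2$. Applied a.e.\ to the approximate metric derivative $\apmd v$ (whose sup-norm equals the generalized gradient $g_v$, i.e.\ the density of $E_+^2$) and integrated, this yields $\mu^i\text{-Area}(v)\le E_+^2(v)$ for every $v\in W^{1,2}(D,X)$.

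The reparametrization step is more delicate: for every $v\in\Lambda(\Gamma,X)$ and every $\varepsilon>0$ I would produce a homeomorphism $\phi:\bar D\to\bar D$ (of suitable Sobolev regularity, restricting to a homeomorphism of $S^1$) such that $v\circ\phi\in\Lambda(\Gamma,X)$ and $E_+^2(v\circ\phi)\le\mu^i\text{-Area}(v)+\varepsilon$. Pointwise, for each seminorm $s$ there is a \emph{unimodular} linear map $T=T(s)$ with $g(s\circ T)^2\le\mu^i(s)$: writing $J_s$ for the inner product whose unit ball is the John ellipse of $B_s$, the inclusion $E\subseteq B_s$ gives $s\le\sqrt{J_s}$, and any unimodular $T$ diagonalizing $J_s$ to a round metric of radius $1/\sqrt{ab}$ does the job. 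Globally, the John inner products of $\apmd v$ form a measurable conformal structure on $D$; a uniformizing map for this structure (obtained via the measurable Riemann mapping theorem after an approximation of $v$ by Lipschitz maps to control the eccentricity) plays the role of $\phi$, and a change of variables delivers the required integral bound.

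The main obstacle is precisely this reparametrization step. Pointwise-optimal matrix fields $T(\apmd v)$ are generically not integrable to the derivative of a diffeomorphism, and the John structure on $D$ need not have bounded eccentricity for a general Sobolev $v$; hence measurable uniformization must be combined with an approximation/limit argument. A further delicate point is to arrange that $\phi$ extends to a homeomorphism of $\partial D$ so that $\mathrm{tr}(v\circ\phi)$ remains a weakly monotone parametrization of $\Gamma$, ensuring $v\circ\phi$ is genuinely a competitor in $\Lambda(\Gamma,X)$.
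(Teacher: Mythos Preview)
Your pointwise inequality $\J^i(s)\le I_+^2(s)$ and its integrated consequence $\Area_{\mu^i}(v)\le E_+^2(v)$ are correct and coincide with the paper's inequality \eqref{genineq} (with $\lambda_{\I}=1$ for $\I=I_+^2$, see \cref{corivanov}).

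The genuine gap is the reparametrization step, and the difficulty you flag is not a technicality that an approximation argument will dissolve. The measurable Riemann mapping theorem requires a Beltrami coefficient with $\|\mu\|_\infty<1$, i.e.\ uniformly bounded eccentricity of the John ellipses of $\apmd v$. For a generic $v\in W^{1,2}(D,X)$ the seminorms $\apmd v_z$ may be degenerate (not norms) on a set of positive measure, or may have arbitrarily large aspect ratio; approximating $v$ by Lipschitz maps does not help, since a Lipschitz map can still have rank-one metric derivative almost everywhere. Perturbing $v$ to force bounded eccentricity (e.g.\ by adding a small Euclidean component) changes the target space and destroys membership in $\Lambda(\Gamma,X)$, so to recover a competitor in $X$ one is pushed into a limit argument that needs exactly the lower semicontinuity/compactness machinery you were trying to avoid. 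In short, the claim ``for every $v$ and every $\varepsilon>0$ there is a homeomorphism $\phi$ of $\bar D$ with $E_+^2(v\circ\phi)\le\Area_{\mu^i}(v)+\varepsilon$'' is not something one can extract from classical uniformization in this generality.

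The paper bypasses this entirely. Rather than reparametrize a given $v$, it argues variationally: assuming some $v$ has strictly smaller $\mu^i$-area than $u$, one minimizes $E_+^2$ over the subset $\Lambda_0=\{w\in\Lambda(\Gamma,X):\Area_{\mu^i}(w)\le\Area_{\mu^i}(v)\}$. Existence of a minimizer $\bar w$ uses the compactness statement preceding \tref{thmmain} together with the quasi-convexity (lower semicontinuity) of $\mu^i$, which for this specific area is already known from \cite{Iva09}. Since biLipschitz self-maps of $D$ preserve $\mu^i$-area, $\bar w$ also minimizes $E_+^2$ over its own reparametrization orbit, and \lref{mainlem} then gives $E_+^2(\bar w)=\Area_{\mu^i}(\bar w)\le\Area_{\mu^i}(v)<\Area_{\mu^i}(u)\le E_+^2(u)$, contradicting the minimality of $u$. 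The point is that the ``well-parametrized competitor'' is produced by a direct minimization rather than by solving a degenerate Beltrami equation; this is exactly what buys the paper a clean proof where your outline stalls.
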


Any minimizer  $u$  of the  Reshetnyak energy as in \tref{thm2}  is  $\sqrt 2$-quasiconformal.  This means, roughly speaking, that $u$ maps infinitesimal balls to ellipses of aspect ratio at most $\sqrt{2}$, see \cite{LW} and Subsection \ref{subsecconst} below. We emphasize that our notion of quasiconformal map is different from the notion of quasiconformal homeomorphism studied in the field of quasiconformal mappings. For any map $v\in W^{1,2} (D,X)$ there is an  energy-area inequality $E^2_+ (v)\geq \Area _{\mu ^i} (v)$; and for any  $u$ as in \tref{thm2} equality holds.

We find a similar phenomenon in the case of the more classical Korevaar-Schoen energy $E^2$, which generalizes the Dirichlet energy from the Riemannian to the metric setting. However, the corresponding \emph{Dirichlet definition of area} $\mu^D$  seems to be new, see Subsection~\ref{subsec+}.


\begin{thm}  \label{thm1}
 There exists a quasi-convex definition of area $\mu ^{D}$
such that the following holds true. For   any  Jordan curve $\Gamma$   in a proper metric space $X$, and   for any
map $u\in \Lambda (\Gamma, X)$ with minimal Korevaar-Schoen energy $E ^2(u)$ in  $  \Lambda (\Gamma ,X)$,  this map $u$ minimizes the $\mu ^D$-area  in $\Lambda (\Gamma ,X)$.
\end{thm}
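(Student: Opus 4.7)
The plan is to follow the template of \tref{thm2}, replacing the Reshetnyak pair $(E_+^2,\mu^i)$ by a new pair $(E^2,\mu^D)$. First I would isolate the correct infinitesimal definition. For a seminorm $s$ on $\R^2$ set
\[
e_{KS}(s) \;:=\; \frac{1}{\pi}\int_{S^1} s(v)^2\, d\hm^1(v),
\]
the pointwise Korevaar--Schoen energy density, and define $\mu^D(s)$ so that the pointwise inequality $\mu^D(s)\leq e_{KS}(s)$ always holds, with equality on exactly those seminorms which arise as approximate metric derivatives of $E^2$-minimizers. In the Riemannian model $s(v)=|Av|$ one has $e_{KS}(s)=\tfrac12(\lambda_1^2+\lambda_2^2)$ while the Jacobian $\lambda_1\lambda_2$ satisfies AM--GM, so the natural candidate $\mu^D(s)$ is the supremum of the Jacobians $\det B$ of Euclidean linear maps $B$ whose Hilbert--Schmidt size is controlled by $s$. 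By construction this agrees with the Jacobian in the Riemannian case and with the area of a canonically associated John-type ellipse in general.

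Next I would verify that $\mu^D$ is a quasi-convex definition of area in the sense of \cite{Iva09}. Monotonicity, homogeneity, and agreement with Lebesgue measure on Euclidean norms are immediate from the definition. The decisive point, topological semi-ellipticity, should follow from the pointwise bound $\mu^D\leq e_{KS}$ combined with the well-known lower semicontinuity and quasi-convexity of the functional $E^2$, by the same mechanism that is used in \cite{Iva09} and \cite{LW} to derive the quasi-convexity of $\mu^i$ from the corresponding property of $E_+^2$.

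Given these infinitesimal ingredients, the main theorem follows from a Douglas--Rado--Morrey-type argument. If $u\in\Lambda(\Gamma,X)$ minimizes $E^2$, then the quasiconformality result of \cite{LW} places $\apmd u(z)$ in the equality locus of $\mu^D\leq e_{KS}$ for a.e.\ $z\in D$, giving
\[
\Area_{\mu^D}(u) \;=\; \int_D \mu^D(\apmd u(z))\, d\lm^2(z) \;=\; E^2(u).
\]
For any competitor $v\in\Lambda(\Gamma,X)$ and any $\varepsilon>0$, a uniformization step is to produce a reparametrization $\varphi\colon D\to D$ such that $\tilde v := v\circ\varphi\in\Lambda(\Gamma,X)$ satisfies $E^2(\tilde v)\leq\Area_{\mu^D}(v)+\varepsilon$; energy minimality of $u$ then yields $\Area_{\mu^D}(u)= E^2(u)\leq E^2(\tilde v)\leq\Area_{\mu^D}(v)+\varepsilon$, and letting $\varepsilon\to 0$ gives the conclusion. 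The main obstacle will be this uniformization: one needs a (quasi)conformal-type reparametrization matched to the new equality case $\mu^D=e_{KS}$, which is different from the $\sqrt{2}$-quasiconformal parametrization used for $(E_+^2,\mu^i)$ in \cite{LW}, and one must check that the resulting reparametrization is Sobolev-regular enough to remain in the Plateau class $\Lambda(\Gamma,X)$.
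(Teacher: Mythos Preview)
Your outline has the right overall shape but leaves the two decisive steps unresolved, and your definition of $\mu^D$ is not the one that makes the argument work. The paper defines $\mu^D$ via the ${\rm SL}_2$-infimum of the pointwise Korevaar--Schoen density, namely $\J^D(s)=\tfrac12\inf_{T\in{\rm SL}_2} I^2(s\circ T)$ (this is exactly \eqref{equation1}). With this definition the equality locus of $\J^D\le \tfrac12 I^2$ is precisely the set of $I^2$-minimal seminorms, and the fact that an $E^2$-minimizer $u$ has $I^2$-minimal derivatives a.e.\ follows not from the mere quasiconformality statement of \cite{LW} (which is weaker), but from the reparametrization lemma \lref{mainlem}: minimality of $E_{\I}(u)$ over all biLipschitz self-maps of $D$ forces $\apmd u_z$ to be $\I$-minimal. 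Your proposed description of $\mu^D$ as a supremum of determinants of Euclidean maps ``controlled by $s$'' is not obviously equivalent to this and does not give a clean characterization of the equality case.

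The two substantive gaps are the quasi-convexity of $\mu^D$ and the uniformization step. The pointwise bound $\J^D\le \tfrac12 I^2$ together with lower semicontinuity of $E^2$ does \emph{not} yield quasi-convexity of $\mu^D$: a pointwise inequality in the wrong direction gives no lower bound on the area of a limit. The paper proves quasi-convexity (\tref{quasiarea}) by an independent argument, adapted from \cite{Jost}: one minimizes the $\I$-energy over the restricted class $\Lambda_0(Z)$ of uniform limits of homeomorphisms onto a biLipschitz disc $Z$, shows these minimizers satisfy the area--energy equality, and derives a contradiction to semicontinuity of $E_{\I}$ if $\mu^{\I}$ were not quasi-convex. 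As for the uniformization, the paper does \emph{not} attempt to reparametrize an arbitrary competitor $v$ so that its energy approximates its area; there is no measurable Riemann mapping theorem for metric-valued Sobolev maps that would deliver such a $\varphi$. Instead the paper argues by contradiction: assuming some $v$ with $\Area_{\mu^D}(v)<\Area_{\mu^D}(u)$, one minimizes $E^2$ over the sublevel set $\Lambda_0=\{w:\Area_{\mu^D}(w)\le\Area_{\mu^D}(v)\}$, uses quasi-convexity of $\mu^D$ to ensure the limit $\bar w$ remains in $\Lambda_0$, and then applies \lref{mainlem} to $\bar w$ to obtain $\tfrac12 E^2(\bar w)=\Area_{\mu^D}(\bar w)<\tfrac12 E^2(u)$, contradicting minimality of $E^2(u)$. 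Your ``main obstacle'' is thus genuinely circumvented rather than overcome, and the circumvention hinges on the quasi-convexity result that you hand-waved.
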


Recall that quasi-convexity of the definition of area is a very important feature in the present context, since it is equivalent to the  lower semi-continuity of the corresponding area functional  in all Sobolev spaces \cite{LW}, Theorem 5.4, and therefore, closely related to the question  of the existence of area minimizers.

 In order to describe the definition of area $\mu ^{D}$, we just need to fix  the values of the $\mu ^D$-areas of one subset in every normed plane $V$. Considering the subset to be the ellipse arising as the image $L(D)$ of a linear map $L: \R^2\to V$
(see Section \ref{sec2} below), this value
$\Area _{\mu ^D} (L) $ equals
\begin{equation} \label{equation1}
\Area_{\mu ^D} (L)= \frac 1 2  \inf \{ E^2 (L\circ g)\; | \; g\in {\rm SL}_2  \}.
\end{equation}
%
For any Sobolev map $v\in  W^{1,2} (D,X)$  the  energy-area  inequality   $E^2(v) \geq 2\cdot \Area_{\mu^D} (v)$ holds true, with equality for any
  any minimizer $u$ as in \tref{thm1}.   The minimizers  in \tref{thm1} are $Q$-quasiconformal
with  the non-optimal  constant $Q= 2\sqrt 2 + \sqrt 6$ (\cite{LW} and Subsection \ref{subsec+} below).
 An answer to  the following question would shed light on the  structure of energy minimizers from \tref{thm1}, cf. \cite{Milman}, p.723 for the "dual" question.

\begin{quest}
For which $g\in {\rm SL}_2$ is the infimum in \eqref{equation1}  attained?
Is it possible to describe the measure $\mu ^{D}$ appearing in \tref{thm1} in a geometric way? What is the optimal quasiconformality constant of the minimizers of the Korevaar-Schoen energy?
\end{quest}

All definitions of  area of Sobolev maps agree with the parametrized Hausdorff area if $X$ is a Riemannian manifold, a space with one-sided curvature bound
 or, more generally,
any space with the property (ET) from \cite{LW}, Section 11. In this case, \tref{thm1} directly generalizes  the classical result of Douglas-Rado-Morrey.

Our results apply to all other \emph{quasi-convex definitions of energy}, see \tref{quasiarea}. We refer
to  Section \ref{sec2} for the exact definitions and mention as a particular example
linear combinations $a\cdot E^2+ b\cdot E_+ ^2+ c\cdot \Area _{\mu}$, where
$a,b,c\geq 0$ with $a^2+b^2>0$ and where $\mu$ is some quasi-convex definition of area.  For any such energy $E$ there exists a quasi-convex definition of area $\mu ^E$ such that a minimizer of $E$ automatically provides a quasiconformal minimizer of $\mu ^E$ as in \tref{thm2} and
\tref{thm1}.
The definition of area  $\mu ^E$ is  given similarly to \eqref{equation1}.

\begin{rem}
We would like to mention a related method of obtaining an area-minimizer
for any quasi-convex definition of area $\mu$.  In the Riemannian case this idea can be found in \cite{HM}, cf. \cite{Dierkes-et-al10}, Section 4.10: Consider the energy $E_{\epsilon} =\epsilon E_+^2 + (1-\epsilon ) \Area _{\mu}$.  Then a minimizer $u_{\epsilon}$ of $E_{\epsilon}$ in $\Lambda (\Gamma ,X)$  can be found in the same way as the minimizer of $E_+^2$.      This minimizer is automatically
$\sqrt 2$-quasiconformal and minimizes the area functional  $(1-\epsilon ) \Area _{\mu}+
\epsilon \Area _{\mu ^i}$ in $\Lambda (\Gamma ,X)$.   Due to the quasiconformality these minimizers have uniformly bounded energy. Therefore one can go to the limit (fixing three points in the boundary circle) and obtain a minimizer of $\Area _{\mu}$.
\end{rem}



This remark also shows that the set of quasi-convex areas obtained via the minimization of energies as in \eqref{equation1} is a dense convex subset
in the set of all quasi-convex definitions of area. It seems to be a natural question  which
definitions of area correspond  in this way to some energies. In particular, if it is the case for the most famous Hausdorff, Holmes-Thompson and Benson definitions of area.

\begin{rem} \label{remark}
From \tref{quasiarea} and \tref{thm2} one can deduce the quasi-convexity of the inscribed Riemannian area $\mu ^i$. However, a much stronger convexity property of this area has been shown  in \cite{Iva09}.
\end{rem}

\subsection{Regularity of energy minimizers}
In the presence of quadratic isoperimetric inequalities the regularity results for area minimizers obtained in \cite{LW}  imply regularity of energy minimizers,
once we have identified energy minimizers as area minimizers in \tref{thmmain}.
Recall that a complete metric space $X$ is said to admits a $(C,l_0)$-quadratic isoperimetric inequality with respect to a definition of area $\mu$ if for every Lipschitz curve $c:S^1\to X$ of length
$l\leq l_0$ there exists some $u\in W^{1,2} (D,X)$ with $$\Area _{\mu} (u) \leq C\cdot l^2$$ and such that the trace $tr (u)$ coincides with $c$.
We refer to \cite{LW} for a discussion of this property satisfied by many interesting classes of metric spaces.
If $\mu$ is replaced by another definition of area $\mu '$ then in the definition above only the constant $C$  will change and it will be changed by at most by the factor $2$.  If the assumption is satisfied for some triple $(C,l_0,\mu)$ we say that $X$ satisfies a \emph{uniformly local quadratic isoperimetric inequality}.

As far as qualitative statements are concerned  the constants and the choice of the area do not play any role.  As a consequence of \tref{thm1} and  the regularity results for area minimizers in \cite{LW} we easily deduce continuity up to the boundary and local Hoelder continuity in the interior for
 all  energy minimizers in $\Lambda (\Gamma ,X)$ for any quasi-convex definition of energy.
We refer to \tref{thmgenreg} for the precise statement.

\subsection{Improved regularity of $\mu$-minimal discs}   We can use \tref{thm2} to slightly improve the regularity results for solutions of the Plateau problem obtained in \cite{LW}.
Assume again that $\Gamma$ is a Jordan curve in a proper metric space $X$ and let $\mu$ be a definition of area.
 We introduce the following
   \begin{defn}
   We say that a map $u\in \Lambda (\Gamma ,X)$ is $\mu$-minimal if it minimizes the $\mu$-area in $\Lambda (\Gamma ,X)$, and if
   it has minimal Reshetnyak energy $E_+^2$ among all such minimizers of the $\mu$-area.
\end{defn}

Due to \tref{thm2}, for the inscribed Riemannian definition of area $\mu=\mu ^i$, a $\mu$-minimal disc is just a minimizer
of the Reshetnyak energy $E_+^2$ in $\Lambda (\Gamma, X)$.
It follows from \cite{LW} that, for any  quasi-convex $\mu$, one finds  some  $\mu$-minimal disc  in any non-empty $\Lambda (\Gamma , X)$.
Moreover, any such $\mu$-minimal map is $\sqrt 2$-quasiconformal.
 Assume further that $X$ satisfies the $(C,l_0,\mu )$-quadratic  isoperimetric inequality. In \cite{LW}, we used the quasiconformality to deduce that any such map
 has a locally $\alpha$-Hoelder continuous representative with $\alpha=\frac 1 {8 \pi C}$.  However, $\mu$-minimal maps satisfy a stronger
  infinitesimal condition than $\sqrt 2$-quasiconformality, and this can be used to improve $\alpha$ by a factor of $2\cdot q(\mu) \in [1,2]$ depending on the definition of area $\mu$. The number  $q(\mu )$ equals $1$ for the maximal definition of area $\mu=\mu^i$.  For other definitions of area $\mu$, the number $q(\mu)$ is smaller than $1$ and measures the maximal possible deviation of $\mu$ from $\mu ^i$, see \eqref{eq-def-qmu}.
   For instance,
  $q(\mu ^b ) =\frac{\pi}{4}$  for the Hausdorff area $\mu ^b$.  Thus the following  result improves the above Hoelder exponent by $2$ in the case of the inscribed Riemannian definition of area $\mu  =\mu ^i$ and by $\frac \pi 2$ in the case of the Hausdorff area $\mu =\mu ^b$:

  \begin{thm} \label{optregul}
  Let $\Gamma$ be a Jordan curve in a proper metric space $X$. Assume that $X$ satisfies the $(C,l_0,\mu )$-quadratic  isoperimetric inequality
  and let $u$ be a $\mu$-minimal disc in $\Lambda (\Gamma ,X)$.  Then  $u$ has a
  locally $\alpha$-Hoelder continuous representative  with  $\alpha = q(\mu)\cdot\frac{1}{4 \pi C}$.
  \end{thm}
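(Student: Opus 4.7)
The Hölder estimate in \cite{LW} comes from a Morrey-type iteration fed by two ingredients: the $(C,l_0,\mu)$-quadratic isoperimetric inequality, which bounds the $\mu$-area on small balls by the squared boundary length, and the $\sqrt 2$-quasiconformality of $\mu$-minimal discs, which bounds $E_+^2$ by twice the $\mu^i$-area. The plan is to keep the Morrey iteration intact and to gain the improvement factor $2\cdot q(\mu)$ by replacing the purely quasiconformal energy-area comparison with the sharper infinitesimal inequality
\[
 g_u(z)^2 \leq \J_{\mu^i}(z) \leq q(\mu)^{-1}\,\J_\mu(z) \qquad \text{for a.e. } z\in D,
\]
which is the infinitesimal content of the fact that $\mu$-minimal maps are ``more than just $\sqrt 2$-quasiconformal''.

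\textbf{Step 1 (localization of $\mu$-minimality).} For any open Jordan subdomain $V\subset D$ such that $u|_{\partial V}$ parametrizes a Jordan curve, a standard gluing argument shows that $u|_V$ is itself $\mu$-minimal in $\Lambda(tr(u|_{\partial V}),X)$: any $\mu$-area reducing, or equally-area but energy reducing, competitor on $V$ could be glued to $u|_{D\setminus V}$ to produce a competitor in $\Lambda(\Gamma,X)$ violating the defining minimality of $u$.

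\textbf{Step 2 (sharp infinitesimal bound).} Applying \tref{thm2} to arbitrarily small subdomains $V$ from Step 1, together with the point-wise characterization of $q(\mu)$ as the sharp constant in the comparison $\mu^i(s)\leq q(\mu)^{-1}\mu(s)$ between the two area densities on seminorms $s$, one identifies the approximate metric derivative $\apmd u_z$ at a.e.\ Lebesgue point $z$ as an infinitesimal extremum. This forces the equality case $g_u(z)^2 = \J_{\mu^i}(z)$ of the global energy-area inequality to hold pointwise a.e. Integrating over any small ball $B_r(z)\Subset D$ and invoking the isoperimetric inequality applied to the (for a.e. $r$) Lipschitz loop $u|_{\partial B_r(z)}$, in combination with the localized $\mu$-minimality, yields
\[
 E_+^2(u|_{B_r(z)}) \leq q(\mu)^{-1}\,\Area_\mu(u|_{B_r(z)}) \leq \frac{C}{q(\mu)}\,\ell(u|_{\partial B_r(z)})^2 .
\]

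\textbf{Step 3 and the main obstacle.} With the improved constant $C/q(\mu)$ now in place of the $2C$ implicit in the purely $\sqrt 2$-quasiconformal estimate, the Courant-Lebesgue / Morrey iteration of \cite{LW} goes through verbatim and produces local Hölder continuity with exponent $\alpha = q(\mu)/(4\pi C)$. The main obstacle is Step 2: translating the \emph{global} $\mu$-minimality of $u$ into the \emph{pointwise} identity $g_u^2 = \J_{\mu^i}$ a.e. The subtlety is that a $\mu$-minimal disc need not be a global $E_+^2$-minimizer in $\Lambda(\Gamma,X)$ (so one cannot invoke \tref{thm2} directly), and one must instead perform a first-variation / infinitesimal-replacement argument on shrinking subdomains to extract the equality case of \tref{thm2} at the level of densities, while identifying $q(\mu)$ as the exact sharp constant in the $\mu^i$-vs-$\mu$ comparison, which is where the factor $2\cdot q(\mu)\in[1,2]$ ultimately enters.
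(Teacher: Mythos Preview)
Your overall strategy is exactly right: keep the Morrey iteration from \cite{LW} and replace the crude $\sqrt 2$-quasiconformal bound $E_+^2 \leq 2\,\Area_{\mu^i}$ by the identity $E_+^2 = \Area_{\mu^i}$ on subdomains, then feed in $\Area_{\mu^i}\leq q(\mu)^{-1}\Area_\mu$ from \eqref{areafunct}. The problem is your route to the pointwise identity $g_u^2=\J^i(\apmd u)$ in Step~2. You invoke \tref{thm2} on subdomains, but \tref{thm2} concerns maps that minimize $E_+^2$ over \emph{all} of $\Lambda(\Gamma,X)$, whereas a $\mu$-minimal disc only minimizes $E_+^2$ among $\mu$-area minimizers. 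For $\mu\neq\mu^i$ these classes differ, so \tref{thm2} simply does not apply, on subdomains or otherwise; your ``first-variation / infinitesimal-replacement'' gesture does not close this gap.

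The paper bypasses localization and \tref{thm2} entirely. The observation you are missing is that $\mu$-area is invariant under biLipschitz reparametrizations $\psi:D\to D$, so $u\circ\psi$ is again a $\mu$-area minimizer in $\Lambda(\Gamma,X)$; the second clause in the definition of $\mu$-minimality then forces $E_+^2(u)\leq E_+^2(u\circ\psi)$ for every such $\psi$. This is exactly condition~(3) of \lref{mainlem} with $\I=I_+^2$, and \lref{mainlem} together with \lref{resiso} gives that $\apmd u_z$ is isotropic, hence $I_+^2$-minimal, at a.e.\ $z$. The pointwise identity $g_u^2=\J^i(\apmd u)$ then holds a.e.\ and integrates to $E_+^2(u|_\Omega)=\Area_{\mu^i}(u|_\Omega)$ on every subdomain $\Omega$, with no need to localize $\mu$-minimality. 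From here your Step~3 is correct: one replaces $Q^2=2$ by $q(\mu)^{-1}$ in estimate~(40) of \cite{LW}, Lemma~8.8, and the rest of Proposition~8.7 runs unchanged to give $\alpha=q(\mu)/(4\pi C)$.
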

    For  $\mu =\mu ^i$ we get  the  optimal Hoelder exponent $\alpha =\frac 1 {4\pi C}$  as examples of cones over small circles show
  (see \cite{MR02} and \cite{LW}, Example 8.3).


\subsection{Some additional comments}
The  basic ingredient in the proof of \tref{thm2} and its generalization \tref{thmmain} is the localized version of the classical conformality of energy minimizers. This was already used in \cite{LW}. This idea  shows that almost all  (approximate metric) derivatives  of  any minimizer $u$ of the energy $E$ in $\Lambda (\Gamma ,X)$ have to minimize the energy in their corresponding ${\rm SL}_2$-orbits, as in \eqref{equation1}.


The proof of the quasi-convexity of $\mu ^D$, generalized by  \tref{quasiarea}, is achieved by applying an idea from \cite{Jost}. We obtain  a special parametrization of arbitrary  Finsler discs by minimizing the
energy under additional topological constraints.  This idea  might  be of independent interest as it provides canonical parametrizations of any sufficiently regular surface.

\bigskip

{\bf Acknowledgements:} We would like to thank Frank Morgan for useful comments.

\section{Preliminaries} \label{sec2}
 \subsection{Notation}   By $(\R^2, s)$ we denote the plane equipped with a semi-norm $s$.  If $s$ is not specified, $\R^2$ is always considered with its canonical Euclidean norm, always  denoted by $s_0$. By $D$
we denote the open unit disc in the Euclidean plane $\R^2$ and by $S^1$ its boundary, the unit circle. Integration on open subsets of $\R^2$ is always performed with respect to the Lebesgue measure, unless otherwise stated. By $d$ we denote distances in metric spaces. Metric spaces appearing in this note will be assumed complete.
A metric space is called proper if its closed bounded subsets are compact.

\subsection{Seminorms and convex bodies}\label{subsecseminorm}
By $\mathfrak S_2$ we denote the proper metric space of seminorms on $\R^2$ with the distance given by $d_{\mathfrak S_2} (s,s')= \max _{v\in S^1} \{|s(v)-s'(v)| \}$.  A seminorm $s \in \mathfrak S_2$ is  \emph{$Q$-quasiconformal} if for all $v,w\in S^1$ the inequality $s(v) \leq Q\cdot s(w)$ holds true.
A convex body $C$ in $\R^2$ is a compact convex subset with non-empty interior. Convex, centrally symmetric bodies are in one-to-one correspondence with unit balls of norms on $\R^2$.
Any convex body $C$  contains a unique ellipse of largest area, called the \emph{Loewner ellipse} of $C$. The convex body is called \emph{isotropic}
if its Loewner ellipse is a Euclidean ball (cf. \cite{Milman}).  We call a seminorm $s\in  \mathfrak S_2$ isotropic if it is the $0$ seminorm, or
if $s$ is a norm and its  unit ball $B$ is isotropic. In the last  case the Loewner ellipse of $B$ is a multiple $t\cdot \bar D$ of the closed unit disc.
By John's theorem (cf. \cite{AlvT04}) $B$ is contained  in $\sqrt 2 \cdot t \cdot \bar D$. Therefore, every isotropic seminorm is $\sqrt 2$-quasiconformal.

\subsection{Definitions of area} \label{subsecarea}
While there is an essentially unique natural way to measure areas of Riemannian surfaces, there are many  different ways to measure areas of Finsler surfaces,
some of them more appropriate for different questions.  We refer the reader to
\cite{Iva09}, \cite{Ber14}, \cite{AlvT04} and the literature therein for  more information.

A definition of area $\mu$ assigns a multiple $\mu _V$ of the Lebesgue measure on any $2$-dimensional normed space $V$, such that natural assumptions are fulfilled.  In particular, it assigns the number
$\J^{\mu} (s)$, \emph{the $\mu$-Jacobian} or \emph{$\mu$-area-distortion}, to any seminorm $s$ on $\R^2$ in the following way. By definition,  $\J ^{\mu} (s)=0$ if the seminorm is not a norm. If $s$ is a norm then  $\J ^{\mu} (s)$ equals the  $\mu _{(\R^2,s)}$-area  $\mu _{(\R^2,s)}  (A)$  of the unit   Euclidean square  $A\subset \R^2$. Indeed, the choice of the definition of area is equivalent to a choice of the \emph{Jacobian} in the following sense.
\begin{defn}
A (2-dimensional definition of) Jacobian  is a map $\J: \mathfrak S_2 \to [0,\infty )$  with the following properties:
\begin{enumerate}
\item Monotonicity:    $\J(s)\geq \J(s')$ whenever $s\geq s'$;
\item Homogeneity: $\J(\lambda \cdot s)= \lambda ^2 \cdot \J(s)$ for all $\lambda \in [0,\infty)$;
\item  ${\rm SL}_2$-invariance $\J(s\circ T)=\J(s)$ for any $T\in {\rm SL}_2$;
\item Normalization:  $\J (s_0)= 1$.
\end{enumerate}
\end{defn}

The  properties (2) and (3) can be joined to the usual transformation rule for the area: $\J(s\circ T)= |\det (T)| \cdot \J(s)$.  It  follows that
$\J(s)= 0$ if and only if the seminorm $s$ is not a norm. Moreover, properties
(1)-(3) imply that $\J$ is continuous. This is due to the following crucial  fact: If norms $s_i$ converge to a norm $s$ in $\mathfrak S_2$  then, for any $\epsilon >0$ and all large $i$,  the inequalities $(1-\epsilon) \cdot s_i \leq s \leq (1+\epsilon ) \cdot s_i$ hold true.

A definition of area $\mu$ gives rise to a Jacobian $\J^{\mu}$ described above. On the other hand, any Jacobian $\J:\mathfrak S_2\to [0,\infty )$ provides a unique definition of area $\mu ^{\J}$ in the following way.
On any $(\R^2 ,s)$ the definition of area $\mu ^{\J}$ assigns the $\J(s)$-multiple of the Lebesgue area of $\R^2$. For any normed plane $V$, we choose  a linear isometry to some $(\R^2, s)$ and pull back the corresponding measure from $(\R^2, s)$ to $V$.  By construction, the assignments $\mu \to \J ^{\mu}$
and $\J\to \mu ^{\J}$ are inverses of each other.

\begin{rem}
We refer to another similar geometric  interpretation of a definition of area  discussed in \cite{Ber14}.
\end{rem}

There are many non-equivalent definitions of area/Jacobian. Any two of them differ at most by a factor of $2$, due to John's theorem, \cite{AlvT04}.  The most prominent examples are the Busemann (or Hausdorff) definition  $\mu ^b$, the Holmes-Thompson definition $\mu ^{ht}$,  the Benson (or Gromov $mass ^*$) definition $m^{\ast}$  and the inscribed Riemannian (or Ivanov) definition $\mu ^i$.  We refer to \cite{AlvT04} for  a thorough discussion of these examples and of the whole subject; and to  \cite{Iva09}, \cite{BI13}, \cite{Ber14} for recent developments. Here, we just mention the Jacobians of these four examples (cf. \cite{Ber14}). In the subsequent examples, $B$ will always denote the unit ball of the  normed plane
$(\R^2,s)$.
\begin{enumerate}
\item The Jacobian  $\J^b$ corresponding to the Hausdorff (Busemann) area $\mu^b$
equals $\J ^b(s)=\frac {\pi} {|B|}$, where $|B|$ is  the Lebesgue area of  $B$.
\item The  Jacobian $\J^{ht}$ corresponding to the  Holmes-Thomspon area $\mu ^{ht}$ equals
$\J^{ht}(s)= \frac {|B^*|} {\pi}$, where $|B^*|$ is the Lebesgue area of the
unit ball $B^*$ of the dual norm $s^*$ of $s$.
  \item The Jacobian $\J^*$ corresponding to Benson (Gromov mass$^{\ast}$) definition  of area $m^*$ equals
$\J ^*(s)= \frac 4 {|P|}$, where $|P|$  is the Lebesgue area of a
parallelogram $P$ of smallest area which contains $B$.
\item  The Jacobian $\J^i$ corresponding to  the inscribed Riemannian definition of area $\mu ^i$ equals $\J^i(s) =\frac {\pi} {|L|} $, where $|L|$ is Lebesgue area of  the Loewner ellipse  of  $B$.
\end{enumerate}

\subsection{Comparision of the definitions of area} \label{vergleich}
Below we denote by $|C|$ the Lebesgue area of a subset $C\subset \R^2$.
Let $s$ be a norm on $\R^2$, let $B$ be its unit ball and let $L\subset B$ denote the Loewner ellipse of $B$.  If $s_L$ denotes the norm whose unit ball is
$L$, then $s\leq s_L$ and $s_L$ is Euclidean. Thus, for any definition of area
$\mu $ with Jacobian $\J ^{\mu}$ we have $\J^{\mu} (s_L) = \frac {\pi} {|L|}$ and
$\J ^{\mu}(s) \leq \J^{\mu}(s_L)$.

For the inscribed Riemannian area $\mu ^i$ and its Jacobian $\J^i$ we have equality $\J^{i} (s) =\J^{i} (s_L)$ in the above inequality. Hence, for any other definition of area $\mu$ we must have $ \J^{i} \geq \J^{\mu}$. In particular, the inscribed Riemannian area is the largest definition of area.
On the other hand, by John's theorem,  $\J^i \leq 2\J ^{\mu}$.

We set 
\begin{equation}\label{eq-def-qmu}
q(\mu) := \inf \frac{\J^{\mu } (s)}{\J^i (s)},
\end{equation}
 where $s$ runs over all norms on $\R^2$. As we have  just observed, $q(\mu^i )=1$
and $1/2\leq q(\mu) < 1$ for any other definition of area $\mu$.

\begin{lem}
For the Hausdorff area $\mu ^b$ we have $q(\mu ^b) = \frac{\pi}{4}$.
\end{lem}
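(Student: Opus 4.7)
The plan is a direct computation from the explicit formulas. Observing that for any norm $s$ on $\R^2$ with unit ball $B$ and Loewner ellipse $L$ one has
\[
\frac{\J^b(s)}{\J^i(s)} = \frac{\pi/|B|}{\pi/|L|} = \frac{|L|}{|B|},
\]
I get $q(\mu^b) = \inf_s |L|/|B|$. Since both Jacobians are ${\rm SL}_2$-invariant by axiom (3) in the definition, I may apply an element of ${\rm SL}_2$ to bring $L$ to the Euclidean unit disc $D$. The task then reduces to proving that the supremum of $|B|$ over centrally symmetric convex bodies $B \subset \R^2$ whose Loewner ellipse is $D$ equals $4$.

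The bound $q(\mu^b) \le \pi/4$ is immediate by taking $B = [-1,1]^2$: the dihedral symmetry of the square forces its Loewner ellipse to be a centered disc, and the inscribed disc tangent to all four sides has radius one, so $L = D$ and $|L|/|B| = \pi/4$. For the reverse inequality $q(\mu^b) \ge \pi/4$ I would show $|B| \le 4$ whenever $B$ is centrally symmetric with Loewner ellipse $D$. By John's theorem for centrally symmetric convex bodies in $\R^2$ there exist contact points $u_1, \ldots, u_k \in \partial B \cap S^1$ (closed under negation) and weights $c_i > 0$ with $\sum_i c_i \, u_i \otimes u_i = I$. At each $u_i$ the tangent line to $D$ supports $B$, so $B$ lies inside the centrally symmetric polytope $P = \{x \in \R^2 : |\langle x, u_i\rangle| \le 1 \text{ for all } i\}$, and it suffices to establish $|P| \le 4$.

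The main obstacle is this bound $|P| \le 4$, which is the classical convex-geometric statement that the maximal ratio $|B|/|L|$ of a centrally symmetric convex body and its Loewner (i.e.\ maximal inscribed) ellipse in the plane equals $4/\pi$, with equality for parallelograms. When there are only two pairs of contact points, the trace and off-diagonal constraints in $\sum_i c_i \, u_i \otimes u_i = I$ force $u_1 \perp u_2$ and $c_1 = c_2 = 1$; then $P$ is precisely the circumscribed unit square of $D$, with $|P| = 4$. When there are three or more pairs (as for the regular hexagon inscribed in the tangent polygon of $D$), the gap-angle estimate
\[
|B| \le \sum_j \tan(\Delta_j/2),
\]
where the $\Delta_j$ are the angular gaps between consecutive contact directions (summing to $2\pi$), combined with the constraints imposed on the $\Delta_j$ by the identity decomposition, still yields $|B| \le 4$. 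Alternatively one may cite this fact directly from the theory of the Banach--Mazur distance in the plane, where $4/\pi$ is the optimal volume ratio for centrally symmetric convex bodies in $\R^2$.
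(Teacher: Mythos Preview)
Your approach is essentially the paper's. Both reduce the computation of $q(\mu^b)$ to the ratio $\J^b(s)/\J^i(s)=|L|/|B|$ and then invoke the classical planar volume-ratio fact that $|B|/|L|\le 4/\pi$ for centrally symmetric bodies, with equality for parallelograms. The paper simply cites this as \cite{Bal97}, Theorem~6.2, whereas you first attempt a direct proof via John's decomposition and only afterwards offer the citation.

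Regarding your direct attempt: the two-pair case is clean and correct. For three or more pairs, however, your argument is only a sketch. You assert that the identity decomposition $\sum_i c_i\,u_i\otimes u_i=I$ constrains the gap angles $\Delta_j$ enough to force $\sum_j\tan(\Delta_j/2)\le 4$, but you do not carry this out. The constraint coming from John's condition is merely that the origin lies in the convex hull of the points $e^{2i\theta_j}$ on the circle (equivalently, no arc between consecutive doubled angles exceeds $\pi$); turning this into the sharp bound $|P|\le 4$ requires an actual optimisation argument that you have not supplied. Without it one cannot rule out configurations with $|P|>4$ a priori---indeed, without the John constraint $\sum_j\tan(\Delta_j/2)$ is unbounded. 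So as it stands your proof is complete only because of the final citation, which puts you back exactly where the paper is.
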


\begin{proof}
Let $B$ be the unit ball of the norm $s$ on $\R^2$.  In order to compare $\J^i (s)$ and $\J^b (s)$ we just need
to evaluate $\mu^i$ and $\mu ^b $ on $B$. For the Busemann definition of area we have  $\mu ^b (B)=\pi$.
On the other hand,  $\mu^i (B)= \pi \cdot \frac {|B| } {|L|}$, where $L$ is the Loewner ellipse of
$B$. The \emph{volume ratio} $\frac {|B|} {|L|}$ is maximal when $B$ is a square, see \cite{Bal97}, Theorem 6.2, in which case it is equal to  $\frac 4 {\pi}$.
\end{proof}

Since we will not need further statements about the function $q$ we just summarize here some properties without proofs.
For any definition of area $\mu$, there exists a norm  $s$ with $q(s) \cdot \J^{\mu} (s)= \J^i (s)$. Moreover, using John's theorem
one can show that this norm $s$ can be chosen to have a square or a hexagon as its unit ball.   One can show that
$q(\mu ^{ht} ) = \frac{2}{\pi}$, where again on the supremum norm  $s_{\infty}$  the difference between  $\J^i$ and $\J^{ht}$ is maximal. Finally,  for
Gromov's  definition of area $m^{\ast}$  one can show that $q(m^{\ast} )=\frac{\sqrt  3}{2}$. Here the maximal deviation of $\J^i$ from
$\J^{\ast}$ is achieved for the norm whose unit ball is a regular hexagon.

\subsection{Definitions of energy} \label{subsecdef}
An assignment of a definition of area or Jacobian  is essentially equivalent to the assignment of an area functional on all Lipschitz and Sobolev maps defined on domains in $\R^2$, see below.  Similarly, the choice of an energy functional is essentially equivalent to the following choice of a \emph{definition of energy}:

\begin{defn} A ($2$-dimensional conformally invariant) \emph{definition of energy}  is  a continuous  map   $\I:\mathfrak  S_2\to [0,\infty )$ which has the following properties:
\begin{enumerate}
\item Monotonicity:    $\mathcal I(s)\geq \mathcal I(s')$ whenever $s\geq s'$;
\item Homogeneity: $\mathcal I(\lambda \cdot s)= \lambda ^2 \cdot \I(s)$ for all $\lambda \in [0,\infty)$;
\item  ${\rm SO}_2$-invariance: $\I(s\circ T)=\I(s)$ for any $T\in {\rm SO}_2$;
\item Properness: The set $\I^{-1} ([0,1])$ is compact in $\mathfrak S_2$.
\end{enumerate}
\end{defn}

Due to properness and homogeneity, we have $\I(s)=0$ only for $s=0$.
The properness of $\I$ implies that a definition of energy is \emph{never} ${\rm SL}_2$-invariant, in contrast to a definition of area.
The set of all definitions of energy is a convex cone. Moreover, for any Jacobian $\J$, any definition
of area $\I$ and any $\epsilon >0$ the map $I_{\epsilon} := \J+ \epsilon \cdot \I$ is a definition of energy. Thus the closure (in the topology of locally uniform convergence) of the set of definitions of energy contains all definitions of area.


The following two  definitions of energy are most prominent:  the Korevaar-Schoen-Dirichlet  energy $I^2$ given by $$I^2(s)=\frac 1 {\pi}  \int _{S^1} s(v)^2 dv$$
and the Reshetnyak energy $$I_+^2 (s)= \sup \{ s(v)  ^2 |v\in S^1 \}. $$
Due to properness and homogeneity any two definitions of energy are comparable: For any  definition of energy  $\I$  there is a constant $k_{\I} \geq 1$, such that
\begin{equation} \label{2ener}
\frac 1 {k_{\I}} \cdot {\I} \leq I_+ ^2 \leq k_{\I} \cdot \I.
\end{equation}

\subsection{Energy and area of Sobolev maps} \label{subsecsob}
We assume some experience with Sobolev maps and refer  to \cite{LW}
and the literature therein.  In this note we consider only Sobolev maps   defined on bounded open domains $\Omega \subset \R^2$. Let $\Omega$ be such a domain and let $u\in W^{1,2} (\Omega,X)$ be a Sobolev map with values in $X$.
Then $u$ has an approximate metric derivative
at almost every point $z\in \Omega$ (\cite{Kar07},\cite{LW}), which is  a
seminorm on $\R^2$ denoted by $\apmd u_z$. When $\apmd u_z$  exists, it is the unique seminorm $s$ for which the following approximate limit is $0$:
$$\ap \lim _{y\to z}  \frac {d(u(z),u(y))- s(y-z)} {|y-z|} =0.$$
We refer the reader to \cite{LW}, \cite{Kar07} and mention here only that in the case of locally Lipschitz maps $u$,  the approximate  metric derivative is just the metric derivative defined by Kirchheim
(\cite{Kir94}, cf. also \cite{AK00}, \cite{Iva09}).
If the target space $X$ is a Finsler manifold then the approximate  metric derivative at almost all points $z$ is equal to $|D_z u|$, where $D_z u$ is the usual (weak) derivative and $|\cdot |$ is the given norm on  the tangent space $T_{u(z)} X$.
A map $u\in W^{1,2} (\Omega,X) $ is called \emph{$Q$-quasiconformal}  if  the seminorms $\apmd u_z \in \mathfrak S_2$ are $Q$-quasiconformal for almost all $z\in \Omega$.

For a definition of energy $\I$, the $\I$-energy of a map $u\in W^{1,2} (\Omega ,X)$ is given by  $$E_{\I}(u):= \int _{\Omega}   \I( \apmd u_z) dz$$
This value is well-defined and finite
for any $u \in W^{1,2} (\Omega ,X)$, due to \eqref{2ener}.   If $\I$  is the Korevaar-Schoen  definition of energy  $I^2$, respectively the Reshetnyak definition of energy $I_+^2$  then $E_{\I}(u)$ is the Korevaar-Schoen  respectively the Reshetnyak energy of $u$ described in \cite{KS93}, \cite{Res97} and in the introduction. We will denote $E_{\I}$ in these cases  as before by $E^2$  and  $E_+^2 $, respectively.

Similarly, given a definition of area $\mu$ and the corresponding Jacobian
$\J^  {\mu} $ one obtains the $\mu$-area of $u$ by integrating $\J ^{\mu} (\apmd u_z)$ over $\Omega$. We will denote it by
$$\Area _{\mu} (u)  :=\int _{\Omega}   \J^{\mu} ( \apmd u_z) dz$$
Pointwise comparision of $\mu$ with the inscribed Riemannian definition of area $\mu^i$ discussed in Subsection \ref{vergleich} gives us
for any Sobolev map $u$:
\begin{equation} \label{areafunct}
q(\mu)^{-1} \cdot \Area _{\mu } (u) \geq \Area _{\mu^i} (u) \geq \Area _{\mu} (u).
\end{equation}

\subsection{Quasi-convexity} \label{subsecquas}
A definition of energy  $\I: \mathfrak{S}_2 \to[0,\infty)$ is called \emph{quasi-convex}  if  linear $2$-dimensional subspaces of normed vector spaces have minimal $\I$-energy. More precisely, if  for every finite dimensional normed space $Y$ and every linear map $L: \R^2\to Y$ we have
 \begin{equation}\label{eq1}
E_ {\I} (L|_D) \leq  E_{\I} (\psi)
 \end{equation}
 for every smooth immersion $\psi:  \bar  D\to Y$ with $\psi|_{\partial D} = L|_{\partial D}$.

Similarly, one defines the quasi-convexity of a definition of area with corresponding functional $\J:\mathfrak S _2 \to [0,\infty )$, see Section 5 in \cite{LW}.
 As has been shown in \cite{LW},
in extension of the classical results (cf. \cite{AF84}), a definition of energy
is quasi-convex if and only if the map $u\mapsto E_{\I}(u)$ is  semi-continuous
on any Sobolev space $W^{1,2} (\Omega, X)$ (with respect to  $L^2$-convergence). Similarly,
 the quasi-convexity of a definition of area $\mu$ is equivalent to the semi-continuity property of the $\mu$-area on all Sobolev spaces $W^{1,2} (\Omega, X)$.

Recall that the Reshetnyak and Korevaar-Schoen definitions of energy are quasi-convex (\cite{KS93}, \cite{Res97}, \cite{LW}). The four definitions of area mentioned in Subsection \ref{subsecarea} are quasi-convex as well (\cite{Iva09}, \cite{BI13}, \cite{AlvT04}, \cite{LW}).

We dwell a bit discussing the properties of  a definition of area $\mu$ which is not  quasi-convex  (cf.~\cite{Mor52}).  Let $L:\R^2\to Y$ be a linear map to a finite-dimensional
normed vector space and let $\psi :\bar D\to Y$ be a smooth map which coincides with $L$ on $S^1$ and satisfies
 $$\Area _{\mu} (\psi) < \Area _{\mu} (L|_D).$$
By enlarging $Y$ if needed and by using  a general position argument we can assume that $\psi$ is a diffeomorphism onto its image.
  Now we can obtain a special sequence
of maps $\psi _m:\bar D\to Y$  converging to $L:\bar D\to Y$ and violating the semi-continuity property in the following way. The map $\psi _m$ differs from $L$ on $\delta \cdot m ^2$  disjoint balls of radius $m^{-1}$, where $\delta>0$ is a sufficiently small, fixed constant.  The difference between $\psi _m$
and $L$ on any of these balls is given by the corresponding translate of $\psi$, rescaled by the factor $m^{-1}$.

Then there is a number $K>0$,  such that any of the maps $\psi _m$ is
biLipschitz with the same biLipschitz constant $K$.  The maps $\psi _m$ converge
uniformly to the linear map $L$. Finally, for  $\epsilon = \Area _{\mu } (L)  - \Area _{\mu} (\psi )$, we deduce   $\Area _{\mu } (\psi _m)  =\Area _{\mu } (L) - \delta \cdot \epsilon $   for all $m$. In particular, $\Area_ {\mu} (L)>\lim _{m\to \infty} (\Area {\mu  } (\psi _m))$.

\section{Area definition corresponding to an energy}  \label{sec3}
\subsection{General construction}

Let now $\I$ be any definition of energy. Consider
 the function $\hat J: \mathfrak S_2\to [0,\infty )$:
 $$\hat J(s):= \inf \{ \I(s\circ T) | T\in {\rm SL}_2 \}$$ given by the infimum of
$\I$ on the ${\rm SL}_2$-orbit of $s$. Due to the properness of $\I$, the infimum in the above equation  is indeed a minimum, unless the seminorm is not a norm. On the other hand, if $s$ is not a norm then the ${\rm SL}_2$-orbit of $s$ contains the $0$ seminorm in its closure, and we get $\hat J(s)=0$.
By construction, the function $\hat J:\mathfrak S_2\to [0,\infty )$ is ${\rm SL}_2$-invariant. Since $\I$ is monotone and homogeneous, so is $\hat J$.
Finally,  $\hat J(s_0)$ is different from $0$. Thus, setting the constant $\lambda _{\I}$ to be
 $\frac 1 {\hat J (s_0)}$,  we see that $\J ^{\I} (s) := \lambda _{\I} \cdot  \hat J (s) $
is a definition of a Jacobian in the sense of the previous section.
The definition of area which corresponds to the Jacobian $\J^{\I}$ will be denoted by $\mu ^{\I}$.
By construction,
\begin{equation} \label{generalineq}
\J ^{\I} (s) \leq \lambda _{\I} \cdot \I(s)
\end{equation}
 with equality if and only if $\I$ assumes the minimum on the ${\rm SL}_2 $-orbit of $s$ at the seminorm $s$.

\begin{defn}
We will call a seminorm $s$ minimal for the definition of energy $\I$, or just $\I$-minimal, if $\I(s) \leq \I(s\circ T)$ for all $T\in {\rm SL}_2$.
\end{defn}

Thus  a seminorm $s$ is $\I$-minimal if and only if   we have equality in the inequality \eqref{generalineq}.
By homogeneity and continuity, the set of all $\I$-minimal seminorms is  a closed cone. Any $\I$-minimal seminorm is either a  norm or the trivial seminorm $s=0$. We therefore deduce by a limiting argument:

\begin{lem} \label{quasicon}
There is a number $Q_{\I} >0$ such that  any $\I$-minimal seminorm $s$   is $Q_{\I}$-quasiconformal.
\end{lem}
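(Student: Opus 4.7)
The plan is to exploit the compactness coming from properness of $\I$ together with the ``norm or zero'' dichotomy for $\I$-minimal seminorms recorded just above the lemma. The rough idea is that $Q$-quasiconformality is a scale-invariant property, while $\I$-minimality is preserved under positive scalar multiplication, so it suffices to bound the quasiconformality constant on the compact set of $\I$-minimal seminorms of unit energy.

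First I would introduce the set
$M := \{\,s\in\mathfrak{S}_2 : s \text{ is } \I\text{-minimal and } \I(s)=1\,\}.$
By properness the sublevel set $\I^{-1}([0,1])$ is compact in $\mathfrak{S}_2$; continuity of $\I$ together with closedness of the cone of $\I$-minimal seminorms then imply that $M$ is closed inside this compact set, and so $M$ itself is compact. Crucially, since $\I(0)=0$ by homogeneity and since the paragraph preceding the lemma already records that every $\I$-minimal seminorm is either a genuine norm or the trivial seminorm $s=0$, the compact set $M$ consists \emph{entirely} of norms.

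Next I would define the quasiconformality constant
$Q(s):=\max_{v\in S^1}s(v)\,\big/\,\min_{v\in S^1}s(v)$
on the open subcone of norms in $\mathfrak{S}_2$. Both numerator and denominator are continuous in $s$ with respect to $d_{\mathfrak{S}_2}$, and the denominator is strictly positive for every norm. Hence $Q$ is a finite continuous function on the compact set $M$, and its maximum $Q_{\I}:=\max_{s\in M} Q(s)$ is attained and finite.

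Finally, for an arbitrary nonzero $\I$-minimal seminorm $s$ (necessarily a norm), I would rescale to unit energy: the element $\tilde s := \I(s)^{-1/2}\, s$ satisfies $\I(\tilde s)=1$ by homogeneity and remains $\I$-minimal because the condition $\I(s)\le\I(s\circ T)$ is stable under multiplication by a positive scalar. Hence $\tilde s\in M$, and since $Q$ is scale-invariant, $Q(s)=Q(\tilde s)\le Q_{\I}$, which is the desired uniform bound. The only step requiring any genuine care is the compactness of $M$: one must rule out a sequence of $\I$-minimal unit-energy norms degenerating in $\mathfrak{S}_2$ to a seminorm that fails to be a norm. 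This is precisely where the ``norm or zero'' dichotomy, combined with $\I(0)=0\neq 1$, is indispensable.
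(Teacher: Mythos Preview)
Your argument is correct and is precisely the ``limiting argument'' the paper leaves implicit in the sentence immediately preceding the lemma: the set of $\I$-minimal seminorms is a closed cone consisting of norms together with $0$, and you have simply spelled out the compactness-plus-continuity step on the slice $\I^{-1}(1)$. There is nothing to add; your proof matches the paper's intended approach exactly.
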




\subsection{The Reshetnyak energy and the inscribed Riemannian area} \label{subsecconst}
We are going to discuss the application of the above construction to the main examples. In order to describe the Jacobian $\J^{\I}$, the normalization and the quasiconformality constants $\lambda _{\I}, Q_{\I}$ induced by a definition of energy $\I$, it is crucial to understand $\I$-minimal norms.  By  general symmetry reasons one might expect that $\I$-minimal norms are particularly round. Our first result, essentially contained in \cite{LW}, confirms this expectation for the Reshetnyak energy:


\begin{lem} \label{resiso}
Let $\I=I_+^2$ be the Reshetnyak definition of energy.  A seminorm $s\in \mathfrak S_2$ is $I^2 _+$-minimal if and only if $s$ is isotropic in the sense of Subsection \ref{subsecseminorm}.
%
\end{lem}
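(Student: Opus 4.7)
My plan is to translate the minimality of $I_+^2$ on an $\mathrm{SL}_2$-orbit into a geometric statement about inscribed Euclidean discs, and then use the defining property of the Loewner ellipse to identify the extremizers.

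First I would handle the degenerate cases. If $s=0$ then $s$ is trivially $I_+^2$-minimal and is isotropic by definition. If $s$ is a non-zero seminorm that is not a norm, then $s$ vanishes on some line $\ell$; applying the diagonal $\mathrm{SL}_2$ maps $T_\lambda$ which stretch $\ell$ by $\lambda$ and compress its orthogonal complement by $1/\lambda$, one sees $I_+^2(s\circ T_\lambda)\to 0$ as $\lambda\to\infty$, while $I_+^2(s)>0$, so $s$ is not $I_+^2$-minimal. So the only substantive case is that $s$ is a norm.

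The key observation I will use is that for a norm $s$ with unit ball $B\subset(\R^2,s_0)$, the quantity $I_+^2(s)=\max_{v\in S^1} s(v)^2$ equals $1/r(B)^2$, where $r(B)$ is the Euclidean inradius of $B$ (the largest $r$ with $r\bar D\subset B$). Since the unit ball of $s\circ T$ is $T^{-1}(B)$, minimizing $I_+^2$ over the $\mathrm{SL}_2$-orbit of $s$ is equivalent to maximizing $r(T^{-1}(B))$ over $T\in\mathrm{SL}_2$.

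Now I bound this maximum using the Loewner ellipse $L=L(B)$ of $B$. For any $T\in\mathrm{SL}_2$, the inscribed disc $r(T^{-1}(B))\bar D\subset T^{-1}(B)$ is pushed forward by $T$ to an ellipse inscribed in $B$; by the extremal property of $L$ this ellipse has area at most $|L|$, and since $T$ preserves area we get
\begin{equation*}
\pi\cdot r(T^{-1}(B))^2\le |L|.
\end{equation*}
To see this bound is sharp, I pick any $T_0\in\mathrm{SL}_2$ sending a Euclidean disc of area $|L|$ onto $L$ (such $T_0$ exists since any ellipse can be sent to a centered Euclidean disc of the same area by a unimodular map); then $T_0^{-1}(L)$ is a Euclidean disc and is the Loewner ellipse of $T_0^{-1}(B)$, so $T_0^{-1}(B)$ is isotropic and achieves the bound.

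Finally I would read off the characterization. The minimum of $I_+^2$ on the orbit is attained at $s$ itself if and only if $T=\mathrm{id}$ already realizes the maximal inradius, which by uniqueness of the Loewner ellipse forces $L$ to be a Euclidean disc, i.e.\ $B$ to be isotropic. Conversely, if $B$ is isotropic then $L$ is a Euclidean disc and the bound above is saturated at $T=\mathrm{id}$, so $s$ is $I_+^2$-minimal. The only step needing real care is the sharpness direction: one must argue that if $r(T^{-1}(B))^2=|L|/\pi$, then the inscribed Euclidean disc of $T^{-1}(B)$ must actually coincide with its Loewner ellipse (by the uniqueness of the latter), forcing $T^{-1}(B)$ to be isotropic; I expect this to be the only subtle point of the argument.
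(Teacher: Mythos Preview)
Your proof is correct and follows essentially the same approach as the paper: both arguments identify $I_+^2(s)$ with the reciprocal of the squared Euclidean inradius of the unit ball $B$, and then use the defining (and uniqueness) property of the Loewner ellipse to characterize when the $\mathrm{SL}_2$-infimum is attained at $s$ itself. Your version is slightly more explicit in that you compute the actual value of the infimum as $\pi/|L|$, whereas the paper argues one direction by contradiction (finding an area-increasing $A$ with $A(D)\subset B$) and the other by passing to a minimizer in the orbit; but these are stylistic differences, not substantive ones.
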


\begin{proof}
For seminorms which are not norms the statement is clear. Thus we may assume that $s$ is a norm. After rescaling,
 we may assume $I^2_+(s)=1$. Hence $1=\sup \{ s(v), v\in S^1\}$, and  $\bar D$ is the largest Euclidean disc contained in the unit ball $ B$ of the norm $s$.

   Assume that $s$ is $I_+ ^2$-minimal and  $\bar D$ is not the Loewner ellipse of $B$. Then there exists   an area increasing linear map $A:\R^2\to \R^2$ such that $B$ still contains the ellipse $A(D)$, hence  $I_+ ^2(s\circ A) \leq 1$. Consider the map $T=\det (A) ^{-\frac 1 2} \cdot A \in {\rm SL}_2$. Then $I^2_+(s\circ T) <1$ since $\det (A) >1$. This contradicts the assumption that $s$ is $I^2_+$-minimal.

On the other hand,  if $s$ is isotropic then $\bar D$ is the Loewner ellipse of $B$. Consider an $I^2_+$-minimal norm $s'=s\circ T$ in the ${\rm SL}_2$-orbit of $s$. Then the Loewner ellipse $T(\bar{D})$ of $s'$ must be a multiple of $\bar D$, as we have seen above. Hence $T\in {\rm SO}_2$. Since $I^2_+$ is conformally invariant, we get $I^2_+(s)=I^2_+(s')$, and $s$ is $I^2_+$-minimal.
\end{proof}

Now we can easily deduce:
\begin{cor} \label{corivanov}
For the Reshetnyak definition of energy $\I=I_+^2$ the normalization constant $\lambda _{\I}$ equals $1$, the optimal quasiconformality constant  $Q_{\I}$ equals $\sqrt 2$, and the induced definition of area $\mu ^{\I}$ is the inscribed Riemannian area $\mu ^i$.
\end{cor}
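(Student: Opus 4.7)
My plan is to prove the three claims in turn, in each case leveraging Lemma \ref{resiso} to identify $I_+^2$-minimal seminorms with isotropic ones. The normalisation is immediate: $s_0$ is isotropic (its unit ball $\bar D$ is its own Loewner ellipse) and hence $I_+^2$-minimal by Lemma \ref{resiso}, so $\hat J(s_0) = I_+^2(s_0) = 1$, giving $\lambda_\I = 1$.

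For the equality $\mu^\I = \mu^i$, I would exploit the $\mathrm{SL}_2$-invariance of both $\J^\I$ and $\J^i$ to reduce to a single representative of each orbit of norms. Given a norm $s$ with unit ball $B$ and Loewner ellipse $L = A(\bar D)$ for some $A \in \mathrm{GL}_2^+$, I would factor $A = \lambda T$ with $\lambda > 0$ and $T \in \mathrm{SL}_2$; then the Loewner ellipse of the unit ball of $s \circ T$ is $\lambda \bar D$, so $s \circ T$ is isotropic. Replacing $s$ by this representative, suppose $s$ is isotropic with Loewner ellipse $t\bar D$. The inclusion $t\bar D \subseteq B$ gives $s(v) \leq 1/t$ on $S^1$, while the maximality of $L$ forces a contact point $tv_0 \in \partial(t\bar D) \cap \partial B$, whence $s(v_0) = 1/t$. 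Thus $I_+^2(s) = 1/t^2$, and combining Lemma \ref{resiso} with $\lambda_\I = 1$ gives $\J^\I(s) = I_+^2(s) = 1/t^2 = \pi/|t\bar D| = \J^i(s)$.

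For the quasiconformality constant, the upper bound $Q_\I \leq \sqrt 2$ is immediate from Lemma \ref{resiso} combined with John's theorem as recorded in Subsection \ref{subsecseminorm}. For the matching lower bound I would take the supremum norm $s_\infty$ on $\R^2$: its unit ball $[-1,1]^2$ has Loewner ellipse $\bar D$, so $s_\infty$ is isotropic and hence $I_+^2$-minimal, and the ratio $s_\infty(e_1)/s_\infty((1,1)/\sqrt 2) = \sqrt 2$ realises the bound. The main (though still elementary) subtlety is the isotropic reduction together with the identity $I_+^2(s) = 1/t^2$; the latter depends on the observation that a Loewner ellipse strictly interior to $B$ could be enlarged, contradicting maximality.
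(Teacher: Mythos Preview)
Your proof is correct and follows essentially the same approach as the paper's: both use \lref{resiso} to reduce to isotropic norms, normalise via $s_0$, compute the Jacobians on an isotropic representative (you keep the Loewner radius $t$ explicit, the paper rescales to $t=1$), and invoke $s_\infty$ for sharpness of $Q_{\I}=\sqrt 2$. Your additional remarks on the $\mathrm{SL}_2$-reduction and the contact-point argument for $I_+^2(s)=1/t^2$ make explicit what the paper leaves implicit, but the strategy is identical.
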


\begin{proof}
We have $\lambda _{\mathcal I} =\frac 1 {\hat J( s_0)}= \frac 1 {\I(s_0)}=1$ since $s_0$ is $I_+^2$-minimal. Isotropic seminorms are
$\sqrt 2$-quasiconformal by  John's   theorem. The supremum norm $s_{\infty} \in \mathfrak S_2$  is isotropic, hence $I^2_+$-minimal. For $s_{\infty}$ the
  quasiconformality constant $\sqrt 2$ is optimal.

In order to prove that the induced definition of area coincides with the  inscribed Riemannian area $\mu ^i$, it suffices to evaluate the Jacobians on any $I^2_+$-minimal norm $s$.  By homogeneity we may assume again that the Loewner  ellipse  of the unit ball $B$ of $s$ is the unit disc
$\bar D$. Then $\J ^{\mathcal I} (s)= 1 =\J^i (s)$.
\end{proof}

\subsection{The Korevaar-Schoen energy and the Dirichlet area} \label{subsec+}
Unfortunately, in the classical case of the Korevaar-Schoen energy $\I=I^2$ we do not know much about the induced definition of  area/Jacobian. We 
call this the Dirichlet definition of area/Jacobian  and denote it by $\mu ^D$ and $\J ^D$, respectively. Only the normalization constant in this case is easy to determine.

\begin{lem}
For the Korevaar-Schoen energy $\I=I^2$, the canonical Euclidean norm $s_0$ is $I^2$-minimal. The normalization
constant $\lambda _{\mathcal I}$ equals $\frac 1 2$.
\end{lem}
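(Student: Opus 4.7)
The plan is a direct computation. Since $s_0$ is rotationally symmetric the value $I^2(s_0)$ is immediate, and the key observation is that for any linear map $T$, the integral of $s_0(Tv)^2 = \langle T^{\ast}Tv,v\rangle$ over $S^1$ reduces to the trace of the positive definite symmetric matrix $A := T^{\ast}T$. Once this is in hand, $\mathrm{SL}_2$-minimality amounts to an application of AM--GM to the eigenvalues of $A$, which satisfy $\lambda_1 \lambda_2 = \det A = 1$.

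More concretely, first I would compute
\[
I^2(s_0) = \frac{1}{\pi} \int_{S^1} |v|^2 \, dv = \frac{1}{\pi} \cdot 2\pi = 2.
\]
Next, for any $T \in \mathrm{SL}_2$, writing $v=(\cos\theta,\sin\theta)$ and $A = T^{\ast}T = \bigl(\begin{smallmatrix} a & b \\ b & c\end{smallmatrix}\bigr)$, the standard identities $\int_0^{2\pi}\cos^2\theta\,d\theta = \int_0^{2\pi}\sin^2\theta\,d\theta = \pi$ and $\int_0^{2\pi}\cos\theta\sin\theta\,d\theta = 0$ give
\[
I^2(s_0 \circ T) = \frac{1}{\pi}\int_{S^1} \langle Av,v\rangle \, dv = a + c = \mathrm{tr}(T^{\ast}T).
\]

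With this formula in place, $\mathrm{SL}_2$-minimality of $s_0$ reduces to the assertion that $\mathrm{tr}(T^{\ast}T) \geq 2$ for every $T \in \mathrm{SL}_2$. Since $T^{\ast}T$ is symmetric positive definite with determinant $\det(T)^2 = 1$, its eigenvalues $\lambda_1,\lambda_2 > 0$ satisfy $\lambda_1\lambda_2 = 1$, and AM--GM yields
\[
\mathrm{tr}(T^{\ast}T) = \lambda_1 + \lambda_2 \geq 2\sqrt{\lambda_1\lambda_2} = 2 = I^2(s_0),
\]
with equality exactly when $\lambda_1=\lambda_2=1$, i.e.\ when $T \in \mathrm{SO}_2$. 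This proves that $s_0$ is $I^2$-minimal, and that the minimum value $\hat J(s_0)$ of $I^2$ on the $\mathrm{SL}_2$-orbit of $s_0$ equals $2$. By the definition $\lambda_{\I} = 1/\hat J(s_0)$ we conclude $\lambda_{\I} = \tfrac{1}{2}$.

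There is no real obstacle here; the only point worth double-checking is the reduction of the angular integral to the trace (a one-line Fubini-type calculation). Once that formula is noted, everything else is formal.
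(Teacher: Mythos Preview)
Your proof is correct and follows essentially the same approach as the paper: compute $I^2(s_0)=2$, reduce the $\mathrm{SL}_2$-minimality to an eigenvalue inequality, and conclude by AM--GM. The only cosmetic difference is that the paper first uses $\mathrm{SO}_2$-invariance to reduce to symmetric $T\in\mathrm{SL}_2$ and then works with the eigenvalues of $T$, whereas you work directly with $T^{\ast}T$ for arbitrary $T$; since the eigenvalues of $T^{\ast}T$ are the squares of the singular values (equivalently, the squares of the eigenvalues of the symmetric part in the polar decomposition), the two computations are the same up to this relabeling.
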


\begin{proof}
We have $I^2(s_0)=\frac 1 {\pi} \cdot 2\pi =2$. Therefore,  it suffices to prove the $I^2$-minimality of $s_0$.  Since $I^2$  and $s_0$ are ${\rm SO}_2$-invariant, it suffices to prove $I^2(s_0\circ T) \geq I^2(s_0)$ for any symmetric matrix
$T \in {\rm SL}_2 $. In this case, one easily computes $I^2 (s\circ T) =\frac 1 2 (\lambda _1 ^2 +\lambda _2 ^2)$, where
$\lambda _{1,2}$ are the eigenvalues of  $T$.  Under the assumption $\lambda _1\cdot \lambda _2=\det (T)=1$ the minimum is achieved for $\lambda _1=\lambda _2=1$.  Hence $s_0$ is $I^2$-minimal.
\end{proof}

From the corresponding property of $\I=I^2$, it is easy to deduce that for   norms $s\neq s'$ the inequality  $s\geq s'$ implies the strict inequality  $\J^D(s) >\J^D (s')$, in contrast to the cases of inscribed Riemannian and Benson definitions of areas $\mu ^i$ and $m^{\ast}$. In \cite{LW} it is shown that for $\I=I^2$ the quasiconformality constant
$Q_{\I}$ in \lref{quasicon} can be chosen to be $2\sqrt2 +\sqrt 6$.  However,  the computation of $Q_{\I}$ in \cite{LW}  and  the above strict monotonicity statement show that this constant is not optimal.

 Computing $\J^D$ on the supremum norm $s_{\infty}$  it is possible to see that
$\mu ^D$ is  different from the Busemann and Holmes-Thompson definitions of area.
We leave the lengthy computation to the interested reader.



\section{Main lemma and main theorems}  \label{sec4}
\subsection{Basic observations}  Let $\I$ be a definition of energy and let
$\mu ^{\I}$  and $\J^{\I}$ be the corresponding definitions of area and Jacobian.
 Let $\lambda _{\I}$ be the normalization   constant from the previous section.

Let $X$ be a metric space, $\Omega \subset \R^2$ a domain  and let $u\in W^{1,2} (\Omega ,X)$ be a Sobolev map.     Integrating the point-wise inequality \eqref{generalineq} we deduce:
\begin{equation} \label{genineq}
\Area_{\mu ^{\I}} (u)  \leq \lambda _{\I} \cdot E_{\I} (u)
\end{equation}
Moreover, equality holds if and only if  the approximate metric derivative $ \apmd u_z$  is $\I$-minimal for almost all $z\in \Omega$.
   In case of equality, \lref{quasicon} implies that  the map  $u$ is $Q_{\I}$-quasiconformal.

\subsection{Main Lemma} Conformal invariance  of $\I$ together with the usual transformation rule (\cite{LW}, Lemma 4.9)  has the following direct consequence: For any conformal diffeomorphism $\phi:\Omega '\to \Omega$
which is biLipschitz
and  for any map $u\in W^{1,2} (\Omega ,X)$, the composition  $u\circ \phi$ is contained in $ W^{1,2} (\Omega ', X)$, and it  has the same $\I$-energy as $u$.

The general transformation formula shows that for any definition
of area $\mu$,  any biLipschitz homeomorphism $\phi:\Omega '\to \Omega$,
 and any  $u\in W^{1,2} (\Omega ,X)$ the map $u\circ \phi \in W^{1,2} (\Omega ', X)$ has the same $\mu$-area as $u$.

Now we can state the main technical lemma, which appears implicitly   in
\cite{LW}:

\begin{lem}  \label{mainlem}
Let $\I, \mu ^{\I} , \lambda _{\I}$ be as above.   Let $X$ be a metric space and let $u\in W^{1,2} (D,X)$ be arbitrary.  Then the following conditions are equivalent:
\begin{enumerate}
\item $\Area _{\mu ^{\I} }(u) =\lambda _{\I} \cdot E_{\I}(u)$.
\item For almost every  $z\in D$ the approximate metric derivative $\apmd u_z$ is an  $\I$-minimal seminorm.
\item For every biLipschitz homeomorphism   $\psi : D\to  D$ we have  $E_{\I}(u\circ \psi ) \geq E_{\I}(u)$.
\end{enumerate}
\end{lem}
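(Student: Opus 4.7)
The plan is to prove $(1)\Leftrightarrow (2)$ by integration, $(1)\Rightarrow (3)$ using biLipschitz invariance of the $\mu^{\I}$-area, and $(3)\Rightarrow (2)$ by contrapositive via a local perturbation argument.

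For $(1)\Leftrightarrow (2)$, I would integrate the pointwise inequality \eqref{generalineq}, namely $\J^{\I}(\apmd u_z)\leq \lambda_{\I}\cdot \I(\apmd u_z)$, over $D$; this reproduces \eqref{genineq}. Since the integrand is nonnegative and equality in \eqref{generalineq} holds precisely when $\apmd u_z$ is $\I$-minimal, equality in the integral is equivalent to $\I$-minimality at almost every point, which is exactly $(2)$.

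For $(1)\Rightarrow (3)$, let $\psi\colon D\to D$ be any biLipschitz homeomorphism. The area transformation rule recalled immediately before the lemma gives $\Area_{\mu^{\I}}(u\circ\psi)=\Area_{\mu^{\I}}(u)$. Combining this with \eqref{genineq} applied to $u\circ\psi$ and with hypothesis $(1)$ yields
\[
\lambda_{\I}\cdot E_{\I}(u\circ\psi)\;\geq\;\Area_{\mu^{\I}}(u\circ\psi)\;=\;\Area_{\mu^{\I}}(u)\;=\;\lambda_{\I}\cdot E_{\I}(u),
\]
which after dividing by $\lambda_{\I}$ is exactly $(3)$. Note that this argument neatly bypasses any discussion of orientation of $\psi$, since $\mu^{\I}$-area is biLipschitz invariant in either orientation.

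For $(3)\Rightarrow (2)$, I argue by contrapositive. Suppose $A:=\{z\in D:\apmd u_z\text{ is not }\I\text{-minimal}\}$ has positive measure. A measurable selection, together with properness and continuity of $\I$ on ${\rm SL}_2$-orbits, provides $\varepsilon>0$ and a measurable map $z\mapsto T_z\in {\rm SL}_2$ such that $\I(\apmd u_z\circ T_z)\leq \I(\apmd u_z)-\varepsilon$ on a positive-measure subset $A'\subset A$. Lusin's theorem then gives a compact $A''\subset A'$ on which both $z\mapsto T_z$ and $z\mapsto \apmd u_z$ are continuous, and a Vitali-type covering produces pairwise disjoint closed balls $B_i=\bar B(z_i,r_i)\subset D$ covering most of $A''$ and on which $T_z\approx T_{z_i}$ and $\apmd u_z\approx \apmd u_{z_i}$. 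On each $B_i$ one defines $\psi$ by a standard cutoff construction: the affine map $z\mapsto z_i+T_{z_i}(z-z_i)$ on a central sub-ball, biLipschitz interpolation to the identity on a collar, and identity on $D\setminus\bigcup_i B_i$. The chain rule then gives $\apmd(u\circ\psi)_z=\apmd u_{\psi(z)}\circ T_{z_i}$ on the sub-balls, whose $\I$-energy density is at most $\I(\apmd u_z)-\varepsilon/2$ there by continuity, while the contribution of the collars is uniformly bounded. A careful balancing of the two regions produces $E_{\I}(u\circ\psi)<E_{\I}(u)$, contradicting $(3)$.

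The principal difficulty is the construction in $(3)\Rightarrow (2)$: one must produce a \emph{global} biLipschitz self-homeomorphism of $D$ that realizes the prescribed local ${\rm SL}_2$-perturbations on a set of positive measure, while absorbing the interpolation error on the collars into a net strict drop in $E_{\I}$. The measurable-selection, Lusin and Vitali steps are routine in principle but must be executed quantitatively so the final energy comparison is genuinely strict; controlling the collar contribution may require choosing $T_z$ suitably inside ${\rm SL}_2$ (e.g. on a path toward the identity) so that the biLipschitz constant of $\psi$, and hence the energy overhead of the interpolation, remain small relative to the available energy gap $\varepsilon$.
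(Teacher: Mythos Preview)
Your arguments for $(1)\Leftrightarrow(2)$ and $(1)\Rightarrow(3)$ are correct and coincide with the paper's. The gap is in $(3)\Rightarrow(2)$: the ``careful balancing of the two regions'' is asserted but not carried out, and for a merely continuous definition of energy $\I$ there is no reason it should succeed. If you interpolate between $T_{z_i}$ and the identity across a collar of relative width $1-\rho$, the derivative of $\psi$ on the collar involves a term of order $\|T_{z_i}-\mathrm{Id}\|/(1-\rho)$; letting $\rho\to 1$ makes the collar energy overhead blow up rather than vanish. Your suggested remedy of sliding $T$ toward the identity along a path in ${\rm SL}_2$ shrinks the overhead and the available gap $\varepsilon_t=\I(s)-\I(s\circ T_t)$ simultaneously, each controlled only by a modulus of continuity of $\I$; without differentiability of $\I$ (which the paper does not assume) you cannot conclude that the ratio is favorable, so the competition is never resolved.

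The paper bypasses this competition by replacing the generic cutoff interpolation with an \emph{explicit conformal} one. After reducing via ${\rm SO}_2$-invariance to a diagonal $T=\mathrm{diag}(\lambda_1,\lambda_2)\in{\rm SL}_2$, one observes that the holomorphic map
\[
f_r(z)=c\,z+r^2 d\,z^{-1},\qquad c=\tfrac12(\lambda_1+\lambda_2),\quad d=\tfrac12(\lambda_1-\lambda_2),
\]
agrees with $T$ on the circle $|z|=r$ and is conformal on $\{|z|>r\}$. Gluing $T$ on $B_r$ to $f_r$ outside gives a biLipschitz $\psi_r$ whose effect on $E_{\I}(u)$ outside $B_r$ is \emph{exactly zero} by conformal invariance of $\I$; the only change is the strictly negative contribution on $T(B_r)$, estimated via the Lebesgue-point choice of the center. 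Since $\psi_r$ does not map $D$ to itself, the paper first extends hypothesis $(3)$ to biLipschitz maps $\psi\colon\Omega\to D$ from arbitrary smoothly bounded planar domains, using the Riemann mapping theorem together with conformal invariance of $\I$ --- a preliminary step also absent from your outline. With this device only a single ball and a single $T$ are needed; the measurable selection and Vitali cover are unnecessary.
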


\begin{proof}
We have already proven the equivalence of (1) and (2).   If (1) holds, then (3) follows directly from the general inequality \eqref{genineq} and invariance of the $\Area _{\mu}$ under diffeomorphisms.

It remains  to prove the main part, namely that  (3) implies  (2).
Thus assume (3) holds.  The conformal invariance of $\I$ and the Riemann mapping theorem
imply that for   any other domain $\Omega \subset \R^2$  with  smooth boundary  and any biLipschitz homeomorphism   $\psi: \Omega \to  D$   the inequality
$E_{\I}(u\circ \psi )\geq E_{\I}(u)$ holds true.  Indeed, we only need to compose $\psi$ with a conformal diffeomorphism $F:D \to \Omega $, which is
biLipschitz since the boundary of $\Omega$ is smooth.

  Assume now that (2) does not hold. Then it is possible  to construct    a biLipschitz map  $\psi$  from a domain $\Omega $ to $D$ such that $E_{\I}(u\circ \psi) <E_{\I}(u)$ in the same way as in the proof of Theorem 6.2  in \cite{LW}, to which we refer for some technical details. Here we just explain the major steps.  First, we find  a compact set $K \subset D$ of positive measure such that at no point $z\in K$ the
approximate metric derivative  $\apmd u_z$ is $\I$-minimal. Making $K$ smaller we may assume  that 
the map $z\mapsto \apmd u_z$ is  continuous on $K$.   By continuity, we find a Lebesgue point $z$ of $K$, a map $T\in {\rm SL}_2 $  and some $\epsilon >0$
 such that  $\I(s\circ T) \leq \I(s)-\epsilon$ for any seminorm $s$ which arises as the approximate metric derivative $\apmd u_y$ at some  point $y\in K\cap B_{\epsilon} (z)$.

  We may assume without loss of generality that $z$ is the origin $0$ and that $T$ is a diagonal matrix with  two different  eigenvalues $\lambda _1 >\lambda _2 =\frac 1  {\lambda _1} >0$.  Then (here comes the trick!) we define a family of biLipschitz homeomorphisms $\psi _r :\R^2\to \R^2$ as follows. The map $\psi _r$ coincides with $T$ on the closed  $r$-ball around $0$.   On the complement of this $r$-ball,
the map $\psi _r$ is the restriction of the holomorphic (hence conformal) map
$f_r:\C^{\ast} \to \C$, defined by $f_r(z)= c \cdot z + r^2 \cdot d \cdot z ^{-1}$, where the constants $c,d \in \C$ are given by
$c=\frac 1 2 (\lambda _1+ \lambda _2)$ and $d =\frac 1 2 (\lambda _1 - \lambda _2)$.  Then  the map $f_r$ coincides with $T$ on the $r$-circle around $0$.
This  map $\psi _r$ is  biLipschitz on $\R^2$ (and smooth outside of the $r$-circle around $0$).  Moreover, the map $\psi _r$  preserves  the $\I$-energy of the map $u$ on the complement of the $r$-ball, due to  the conformality of $f_r$ and the conformal invariance of $\I$.  Finally, by construction of  $T$, the map
$\psi _r$ decreases the $\I$-energy  of $u$ by some positive amount (at least $\frac 1 2 \epsilon \pi r^2$), if $r$ is small enough.

  Thus $E_{\I}(u\circ \psi _r) <E_{\I}(u)$ for $r$ small enough. This provides a contradiction and finishes the proof of the lemma.
\end{proof}

\subsection{Formulation of the main theorems}
The proof of the following  theorem is postponed to the next section.

\begin{thm} \label{quasiarea}
Let $\I$ be a quasi-convex definition of energy. Then the corresponding definition of
 area  $\mu ^{\I}$ is quasi-convex as well.
\end{thm}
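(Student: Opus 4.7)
The strategy is to reduce the asserted quasi-convexity of $\mu^{\I}$ to the given quasi-convexity of $\I$ by choosing appropriate reparametrizations on both sides. Let $L\colon\R^2\to Y$ be linear and $\psi\colon\bar D\to Y$ a smooth immersion with $\psi|_{S^1}=L|_{S^1}$, and write $s_L\in\mathfrak S_2$ for the induced seminorm $v\mapsto\|Lv\|_Y$. If $L$ is not injective, then $s_L$ fails to be a norm, $\J^{\I}(s_L)=0$, and the target inequality $\Area_{\mu^{\I}}(L|_D)\le\Area_{\mu^{\I}}(\psi)$ is trivial; assume therefore that $L$ is injective. By properness of $\I$, pick $T_0\in{\rm SL}_2$ attaining $\inf_{T\in{\rm SL}_2}\I(s_L\circ T)=\J^{\I}(s_L)/\lambda_{\I}$, and set $L':=L\circ T_0$, $\Omega:=T_0^{-1}(D)$. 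Then $L'|_\Omega$ has constant approximate metric derivative $s_L\circ T_0$, which is $\I$-minimal; its boundary trace parametrizes the Jordan curve $L(S^1)=\psi(S^1)$; and
\[
\lambda_{\I}\cdot E_{\I}(L'|_\Omega)\;=\;\pi\,\lambda_{\I}\,\I(s_L\circ T_0)\;=\;\pi\,\J^{\I}(s_L)\;=\;\Area_{\mu^{\I}}(L|_D).
\]

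Next, I reparametrize $\psi$ to match this new boundary. Injectivity of $L|_{S^1}$ uniquely determines a boundary homeomorphism $\phi_0\colon S^1\to\partial\Omega$, namely $T_0^{-1}|_{S^1}$, for which $\psi\circ\phi_0^{-1}=L'|_{\partial\Omega}$. Let $\mathcal C$ be the class of Sobolev homeomorphisms $\phi\colon\bar D\to\bar\Omega$ with $\phi|_{S^1}=\phi_0$. For any $\phi\in\mathcal C$, the reparametrization $\psi\circ\phi^{-1}\in W^{1,2}(\Omega,Y)$ has boundary $L'|_{\partial\Omega}$ and, by area-invariance under reparametrization, $\mu^{\I}$-area equal to $\Area_{\mu^{\I}}(\psi)$. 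Following the idea of \cite{Jost}, I minimize $E_{\I}(\psi\circ\phi^{-1})$ over $\mathcal C$, obtain a minimizer $\phi_*$, and set $\tilde\psi:=\psi\circ\phi_*^{-1}$. The holomorphic-surgery construction from the proof of \lref{mainlem}, performed in an arbitrary small Euclidean ball interior to $\Omega$, then forces $\apmd\tilde\psi_y$ to be $\I$-minimal for a.e.\ $y\in\Omega$: otherwise one could strictly decrease $E_{\I}(\tilde\psi)$ by a $\mathcal C$-preserving local modification that leaves $\phi_0$ untouched. Hence
\[
\lambda_{\I}\,E_{\I}(\tilde\psi)\;=\;\Area_{\mu^{\I}}(\tilde\psi)\;=\;\Area_{\mu^{\I}}(\psi).
\]

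To conclude, both $L'|_\Omega$ and $\tilde\psi$ lie in $W^{1,2}(\Omega,Y)$ with the same boundary trace, and the quasi-convexity of $\I$ — which by \cite{LW} is equivalent to the lower semicontinuity of $E_{\I}$ on Sobolev spaces — extends the linear-versus-smooth-immersion inequality to linear-versus-Sobolev competitors, for the ellipse $\Omega$ by conformal pullback to $D$. Thus $E_{\I}(L'|_\Omega)\le E_{\I}(\tilde\psi)$; multiplying by $\lambda_{\I}$ and combining with the two displayed identities gives $\Area_{\mu^{\I}}(L|_D)\le\Area_{\mu^{\I}}(\psi)$, as desired. The main obstacle in this plan is the existence of the minimizer $\phi_*\in\mathcal C$: homeomorphisms with fixed boundary are not closed under weak Sobolev limits, so that minimizing sequences can degenerate. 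Overcoming this requires the smoothness and immersion hypothesis on $\psi$, together with an additional topological constraint (in the spirit of \cite{Jost}) that rules out degeneration along a minimizing sequence and ensures that a genuine reparametrization, rather than a collapsed Sobolev map, is attained in the limit.
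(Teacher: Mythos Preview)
Your plan shares the paper's core idea---reparametrize the competitor so that its approximate metric derivative becomes $\I$-minimal almost everywhere, then exploit the resulting area--energy equality---but the execution diverges, and the gap you flag is more serious than your final paragraph suggests. The class $\mathcal C$ of Sobolev homeomorphisms with prescribed boundary is not closed under the limits you need, so a minimizing sequence will in general only produce a \emph{uniform limit of homeomorphisms} $\phi_*$. At that point $\phi_*^{-1}$ is no longer a function, so the object $\tilde\psi=\psi\circ\phi_*^{-1}$ on which your whole comparison rests is undefined; this is a second obstacle you do not name. One also has to check that the area survives the passage to this weaker class, which requires Lusin's property~(N) for such limits (via pseudo-monotonicity, as in \cite{MM}) and a separate area-invariance argument---none of which is automatic.

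The paper avoids the $\phi^{-1}$ issue entirely by parametrizing the image disc $Z=\psi(\bar D)\subset Y$ directly: it minimizes $E_{\I}$ over the class $\Lambda_0(Z)$ of Sobolev maps $\bar D\to Z$ that are uniform limits of homeomorphisms (with a three-point normalization), proves equi-continuity via Courant--Lebesgue and the biLipschitz structure of $Z$, and establishes area invariance on $\Lambda_0(Z)$ through Lusin's property~(N). It then does \emph{not} compare energies of two maps with matching trace, as you propose in step~5; instead it argues by contradiction using the sequence $\psi_m\to L$ from Subsection~\ref{subsecquas}: the energy minimizers $v_n\in\Lambda_0(\psi_n(\bar D))$ converge to some $v\in\Lambda_0(L(\bar D))$, and lower semi-continuity of $E_{\I}$ together with the area--energy equality for each $v_n$ forces $\Area_{\mu^{\I}}(v)>\lambda_{\I}E_{\I}(v)$, contradicting \eqref{genineq}. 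Thus the paper trades your direct inequality $E_{\I}(L'|_\Omega)\le E_{\I}(\tilde\psi)$ for a limiting argument that sidesteps both the ill-posedness of $\phi_*^{-1}$ and the need to extend quasi-convexity to Sobolev competitors on an ellipse.
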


\tref{quasiarea}  generalizes the first statement of \tref{thm1}.  Together with
\cref{corivanov} it shows that $\mu ^i$ is quasi-convex, cf. Remark \ref{remark}.

Before turning to the main theorem stating the connection of energy and area minimizers,
we recall an important step in the solution of the Plateau problem (\cite{LW}, Propostion 7.5, \cite{KS93}):
Let  $\Gamma$ be a Jordan curve in a proper metric space  $X$. Assume that the sequence of maps
$w_i \in \Lambda (\Gamma , X)$ has  uniformly bounded Reshetnyak energy $E^2 _+(w_i)$. Then there exist
conformal diffeomorphisms $\phi_i:D\to D$ such that  the sequence $w_i' =w_i\circ \phi  \in \Lambda (\Gamma ,X)$ converges in $L^2$ to a map $\bar w\in \Lambda (\Gamma, X)$.  Note that for any quasi-convex
definition of area $\mu$ or energy $\I$, we have in this case (\cite{LW}, Theorem 5.4):
\begin{equation} \label{limit}
\Area _{\mu} (\bar w) \leq \liminf \Area _{\mu} (w_i)\text{ and }  E_{\I}
(\bar w) \leq \liminf E_{\I}  (w_i).
\end{equation}

The proof of the following  theorem will rely on \tref{quasiarea}.

\begin{thm} \label{thmmain}
Let $\I$ be a  quasi-convex definition of energy. Let $\Gamma$ be a Jordan curve in
a proper metric space $X$.  Any map $u\in \Lambda (\Gamma, X)$ with minimal $\I$-energy in
$\Lambda (\Gamma ,X)$  has   minimal $\mu^{\I}$-area  in $\Lambda (\Gamma ,X)$. Moreover, $u$ is
$Q_{\I}$-quasiconformal.
\end{thm}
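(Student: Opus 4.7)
The plan is to combine \lref{mainlem} with the existence theory for $\mu^{\I}$-area minimizers supplied by \tref{quasiarea}. Both the given $\I$-energy minimizer $u$ and a suitably chosen area minimizer $v^{*}$ will be shown to satisfy condition~(1) of \lref{mainlem}, i.e., that $\Area_{\mu^{\I}}(\,\cdot\,)=\lambda_{\I}E_{\I}(\,\cdot\,)$ holds for that map. Once this is known for both maps, a short chain of inequalities forces $\Area_{\mu^{\I}}(u)=\Area_{\mu^{\I}}(v^{*})$, giving area-minimality of $u$.

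As the first step I would verify condition~(3) of \lref{mainlem} for $u$. Any biLipschitz homeomorphism $\psi:D\to D$ extends to $\bar D$ and maps $S^{1}$ homeomorphically onto itself, so $u\circ\psi\in\Lambda(\Gamma,X)$. Energy minimality then yields $E_{\I}(u\circ\psi)\geq E_{\I}(u)$, and \lref{mainlem} delivers $\Area_{\mu^{\I}}(u)=\lambda_{\I}E_{\I}(u)$ together with $Q_{\I}$-quasiconformality of $u$ via \lref{quasicon}.

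The heart of the argument is producing a companion $v^{*}\in\Lambda(\Gamma,X)$ which minimizes $\Area_{\mu^{\I}}$ and, among all such minimizers, also minimizes $E_{\I}$. For this I would run the direct method: first take a minimizing sequence for $\Area_{\mu^{\I}}$ in $\Lambda(\Gamma,X)$, then among area minimizers extract a further minimizing sequence for $E_{\I}$. By \eqref{2ener} the Reshetnyak energies of such a sequence are uniformly bounded, so after composing with suitable M\"obius transformations of $D$ (which preserve both $E_{\I}$ and $\Area_{\mu^{\I}}$, since $\I$ is conformally invariant and $\Area_{\mu^{\I}}$ is invariant under biLipschitz diffeomorphisms) Proposition~7.5 of \cite{LW} produces an $L^{2}$-limit $v^{*}\in\Lambda(\Gamma,X)$. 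Lower semicontinuity of $\Area_{\mu^{\I}}$, supplied by \tref{quasiarea}, together with lower semicontinuity of $E_{\I}$ from quasi-convexity of $\I$, guarantees that $v^{*}$ realizes both infima. This compactness/semicontinuity step, which is precisely where the full force of \tref{quasiarea} enters, is the main obstacle.

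Finally, the same application of \lref{mainlem} works for $v^{*}$: any biLipschitz reparametrization $v^{*}\circ\psi$ has the same $\mu^{\I}$-area as $v^{*}$ by the transformation formula, hence is still an area minimizer, so by the choice of $v^{*}$ we have $E_{\I}(v^{*}\circ\psi)\geq E_{\I}(v^{*})$. Condition~(1) of \lref{mainlem} thus gives $\Area_{\mu^{\I}}(v^{*})=\lambda_{\I}E_{\I}(v^{*})$. Combining this with the energy minimality $E_{\I}(u)\leq E_{\I}(v^{*})$ and the area minimality $\Area_{\mu^{\I}}(v^{*})\leq\Area_{\mu^{\I}}(u)$ yields
\[
\lambda_{\I}E_{\I}(u)=\Area_{\mu^{\I}}(u)\geq\Area_{\mu^{\I}}(v^{*})=\lambda_{\I}E_{\I}(v^{*})\geq\lambda_{\I}E_{\I}(u),
\]
so all terms are equal and $u$ minimizes the $\mu^{\I}$-area in $\Lambda(\Gamma,X)$.
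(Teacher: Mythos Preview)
Your argument is correct and follows the same route as the paper: apply \lref{mainlem} to the energy minimizer $u$ to get $\Area_{\mu^{\I}}(u)=\lambda_{\I}E_{\I}(u)$ and $Q_{\I}$-quasiconformality, then produce a competitor on which \lref{mainlem} again yields the area--energy equality, and finish with the obvious chain of inequalities. The only difference is organizational. The paper argues by contradiction: assuming some $v$ with strictly smaller area exists, it minimizes $E_{\I}$ over the sublevel set $\Lambda_0=\{w\in\Lambda(\Gamma,X):\Area_{\mu^{\I}}(w)\le\Area_{\mu^{\I}}(v)\}$, obtaining $\bar w\in\Lambda_0$ with $E_{\I}(\bar w)<E_{\I}(u)$. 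You instead minimize $E_{\I}$ over the set of exact $\mu^{\I}$-area minimizers to produce $v^*$ directly. In both cases the competitor is stable under biLipschitz reparametrization of $D$ (area is invariant), so condition~(3) of \lref{mainlem} holds and the proofs conclude identically.

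One small point worth tightening: your two-phase description (``first take a minimizing sequence for $\Area_{\mu^{\I}}$ \ldots\ then among area minimizers extract a further minimizing sequence for $E_{\I}$'') skips over the existence of area minimizers. A minimizing sequence for $\Area_{\mu^{\I}}$ alone need not have bounded Reshetnyak energy, so the first phase does not run by itself; you must either cite the existence of $\mu^{\I}$-area minimizers from \cite{LW} (legitimate, since $\mu^{\I}$ is quasi-convex by \tref{quasiarea}) before passing to the second phase, or merge the two phases as the paper does. The sublevel-set trick is slightly cleaner precisely because $\Lambda_0$ is visibly nonempty (it contains $v$) and one minimizes energy there in a single step with an automatic energy bound.
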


\begin{proof}
  Let   $u\in \Lambda (\Gamma, X)$ with minimal $\I$-energy  among all maps $v\in  \Lambda (\Gamma ,X)$ be given. Then $E_{\I}(u) \leq E_{\I}(u\circ \phi)$ for any biLipschitz
	homeomorphism $\phi:D\to D$. Due to \lref{mainlem}, $\Area _{\mu ^{\I} }(u) =\lambda _{\I} \cdot E_{\I}(u)$. Moreover, by \lref{quasicon}, almost all approximate derivatives of $u$
are $Q_{\I}$-quasiconformal. This proves the last statement.



Assume that $u$ does not minimize the $\mu^{\I}$-area and take another element
  $v\in  \Lambda (\Gamma ,X)$ with $\Area _{\mu ^{\I}} (v) <\Area _{\mu ^{\I}}(u)$.   Consider the
set  $\Lambda _0$ of elements $w\in \Lambda (\Gamma ,X)$ with $\Area _{\mu ^\I} (w) \leq \Area _{\mu ^\I} (v)$.
  We take   a sequence $w_n \in \Lambda _0$ such that
$E_{\I} (w_n)$ converges to the infimum  of the $\I$-energy on  $\Lambda _0$.
 Due to \eqref{2ener}, the Reshetnyak energy of all maps $w_n$  is bounded  from above by a uniform constant.
Using the observation preceding \tref{thmmain}, we  find   some   $\bar w \in \Lambda (\Gamma ,X)$ which satisfies \eqref{limit}.  Here we have used the quasi-convexity of $\mu^{\I}$, given   by \tref{quasiarea}.

Thus,  $\bar w$ is contained in $\Lambda _0$ and minimizes the $\mathcal I$-energy in $\Lambda _0$.
In particular,  $E_{\I}(\bar w \circ \phi) \geq E_{\I}(\bar w)$, for any biLipschitz homeomorphism $\phi : D\to  D$.
Applying \lref{mainlem}  to the map $\bar w$  we deduce
$$\lambda _{\I} \cdot E_{\I}(u)= \Area _{\mu^{\I}} (u) >\Area _{\mu ^{\I}}(v)\geq  \Area _{\mu ^{\I}}(\bar w) =\lambda _{\I} \cdot E_{\I}(\bar w).$$
This contradicts the minimality of $E_{\I}(u)$.
\end{proof}

\subsection{Regularity of energy minimizers}
The regularity of energy minimizers is now a direct consequence of \cite{LW}. Recall that a Jordan curve  $\Gamma\subset X$ is a  \emph{chord-arc curve} if the restriction of the metric to $\Gamma$  is biLipschitz equivalent to the induced  intrinsic metric. A map $u:D\to X$ is said to satisfy \emph{Lusin's property $(N)$} if for any subset $S$ of $D$ with area $0$ the image
$u(S)$ has zero two-dimensional Hausdorff measure.

\begin{thm} \label{thmgenreg}
Let $X$ be a  proper metric space which satisfies a uniformly  local quadratic isoperimetric inequality.
Let $\I$ be a quasi-convex definition of energy and let  $\Gamma $ be a  Jordan curve in $X$ such that the set
$\Lambda (\Gamma ,X)$ is not empty. Then there exists a minimizer $u$ of the $\I$-energy in $\Lambda (\Gamma ,X)$.
Any such minimizer has a unique
     locally Hoelder continuous representative which extends to a continuous map on $\bar D$.  Moreover, $u$ is contained in the Sobolev space
     $W^{1,p} _{loc} (D,X)$ for some $p>2$ and satisfies Lusin's property $(N)$.  If the curve $\Gamma$ is a chord-arc curve
     then $u$ is Hoelder continuous on $\bar D$.
\end{thm}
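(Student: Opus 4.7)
The plan is to reduce the theorem to the identification of $\I$-energy minimizers as quasiconformal $\mu^{\I}$-area minimizers supplied by \tref{thmmain}, and then invoke the regularity machinery for area minimizers developed in \cite{LW}.

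First I would establish existence by the direct method. Take a minimizing sequence $u_n \in \Lambda(\Gamma, X)$ for $E_{\I}$. By the comparability \eqref{2ener}, the Reshetnyak energies $E_+^2(u_n)$ are uniformly bounded. Using the compactness statement recalled immediately before \tref{thmmain} (precomposing with suitable conformal diffeomorphisms $\phi_n : D \to D$), a subsequence of $u_n \circ \phi_n$ converges in $L^2$ to some $\bar u \in \Lambda(\Gamma, X)$; conformal invariance of $\I$ preserves the energy during this reparametrization, and the quasi-convexity of $\I$ yields the lower semi-continuity \eqref{limit}, so $\bar u$ minimizes $E_{\I}$.

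Second, fix any $\I$-energy minimizer $u \in \Lambda(\Gamma, X)$. By \tref{thmmain}, $u$ is $Q_{\I}$-quasiconformal and minimizes the $\mu^{\I}$-area in $\Lambda(\Gamma, X)$. The hypothesis that $X$ satisfies a uniformly local quadratic isoperimetric inequality with respect to some $(C, l_0, \mu)$ transfers, by the factor-of-$2$ comparison of any two definitions of area noted earlier in the paper, to a $(C', l_0, \mu^{\I})$-quadratic isoperimetric inequality with $C' \le 2C$. Thus $u$ is a quasiconformal $\mu^{\I}$-area minimizer in a space with a uniformly local quadratic isoperimetric inequality with respect to $\mu^{\I}$, which is precisely the setting of the regularity theorems of \cite{LW}.

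Finally I would quote the regularity package from \cite{LW}: for such quasiconformal area minimizers one obtains (i) a locally $\alpha$-Hoelder continuous representative on $D$, (ii) a continuous extension to $\bar D$, (iii) higher integrability $u \in W^{1,p}_{loc}(D,X)$ for some $p>2$, (iv) Lusin's property $(N)$, and (v) Hoelder continuity on all of $\bar D$ when $\Gamma$ is a chord-arc curve. Each of these conclusions is stated in \cite{LW} for $\mu$-minimal discs with $Q$-quasiconformality for some uniform $Q$, and applies verbatim with $Q = Q_{\I}$ and $\mu = \mu^{\I}$. The main obstacle is essentially bookkeeping: keeping track of which constants from \cite{LW} depend on $Q_{\I}$ and the isoperimetric data, and verifying that switching the reference area from $\mu$ to $\mu^{\I}$ only worsens the isoperimetric constant by a controlled factor; beyond that, no new analytic input is required.
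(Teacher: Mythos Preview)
Your proposal is correct and follows essentially the same route as the paper: existence via the direct method (the paper phrases this as a citation to \cite{LW}, Theorem~5.4 and Proposition~7.5), then invoking \tref{thmmain} to identify any $\I$-minimizer as a $Q_{\I}$-quasiconformal $\mu^{\I}$-area minimizer, and finally quoting the regularity results of \cite{LW}, Theorems~8.1, 9.2, and 9.3. Your explicit remark about transferring the isoperimetric inequality from $\mu$ to $\mu^{\I}$ via the factor-of-$2$ comparison is a detail the paper leaves implicit but is indeed needed.
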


\begin{proof}
The existence of a minimizer $u$ of the $\I$-energy in $\Lambda (\Gamma ,X)$ is a consequence of \cite{LW}, Theorem~5.4 and Proposition~7.5,  see also Theorem~7.6.

Any map $u$ minimizing the $\I$-energy in $\Lambda (\Gamma, X)$  is quasiconformal  and minimizes the $\mu ^{\I}$-area in
$\Lambda (\Gamma, X)$, by \tref{thmmain}. The result now follows from \cite{LW}, Theorems 8.1, 9.2, and 9.3.
\end{proof}

\subsection{Optimal regularity}
We are going to provide the proof of \tref{optregul}. Thus, let $u\in W^{1,2}(D, X)$ be as in \tref{optregul}. Then for any biLipschitz homeomorphism $\psi: D\to D$ we have $\Area_\mu(u\circ \psi)= \Area_\mu(u)$ and therefore $E_+^2(u\circ\psi) \geq E_+^2(u)$. Applying \lref{mainlem} and \lref{resiso} we see that $u$ is infinitesimally isotropic in the following sense.

\begin{defn}
 A map $u\in W^{1,2}(D, X)$ is infinitesimally isotropic if for almost every $z\in D$ the approximate metric derivative of $u$ at $z$ is an isotropic seminorm.
\end{defn}

\tref{optregul} is thus an immediate consequence of the following theorem.

\begin{thm}
 Let $\Gamma$ be a Jordan curve in a metric space $X$. Assume that $X$ satisfies the $(C,l_0,\mu )$-quadratic  isoperimetric inequality
  and let $u\in\Lambda (\Gamma ,X)$ be an infinitesimally isotropic map having minimal $\mu$-area in $\Lambda (\Gamma ,X)$.  Then  $u$ has a
  locally $\alpha$-Hoelder continuous representative  with  $\alpha = q(\mu)\cdot\frac{1}{4 \pi C}$.
\end{thm}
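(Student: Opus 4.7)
The plan is to adapt the Morrey-type energy decay argument already used in \cite{LW} for the Reshetnyak energy on small concentric discs $B_r(z)\subset D$, replacing the generic $\sqrt{2}$-quasiconformal bound by the sharper pointwise identity available from infinitesimal isotropy. Four ingredients suffice: (i) local $\mu$-minimality of $u|_{B_r(z)}$, (ii) the quadratic isoperimetric inequality, (iii) a conversion of the $\mu$-area bound into a Reshetnyak energy bound using isotropy together with the constant $q(\mu)$, and (iv) a length-energy estimate on $\partial B_r(z)$.

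For (i), if $u|_{B_r(z)}$ did not minimize $\mu$-area among Sobolev maps $B_r(z)\to X$ with the same trace on $\partial B_r(z)$, a lower-area competitor glued back in would contradict global minimality of $u$. For (ii), the trace $u|_{\partial B_r(z)}$ is absolutely continuous for a.e.\ $r$ by the ACL property of Sobolev maps, so the $(C,l_0,\mu)$-quadratic isoperimetric inequality combined with (i) gives
\[
\Area_\mu(u, B_r(z)) \leq C\,\ell(r)^2, \qquad \ell(r):=\mathrm{length}(u|_{\partial B_r(z)}),
\]
for such $r$ with $\ell(r)\leq l_0$.

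For (iii), isotropy of $\apmd u_y$ at a.e.\ $y$ makes this seminorm $I_+^2$-minimal by \lref{resiso}, hence \lref{mainlem} together with \cref{corivanov} yields the pointwise identity $I_+^2(\apmd u_y)=\J^i(\apmd u_y)$. Coupled with the pointwise inequality $\J^i\leq q(\mu)^{-1}\J^\mu$ coming from \eqref{eq-def-qmu}, this integrates to
\[
E_+^2(u, B_r(z)) = \Area_{\mu^i}(u, B_r(z)) \leq q(\mu)^{-1}\Area_\mu(u, B_r(z)) \leq \frac{C}{q(\mu)}\,\ell(r)^2.
\]
For (iv), a standard Cauchy-Schwarz computation in polar coordinates yields
\[
\ell(r)^2 \leq 2\pi r\int_{\partial B_r(z)} I_+^2(\apmd u)\,d\mathcal{H}^1 = 2\pi r\,f'(r),
\]
where $f(r):=E_+^2(u,B_r(z))$ is differentiable at a.e.\ $r$ by Fubini.

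Combining the last two displays gives the differential inequality $f(r)\leq (2\pi C/q(\mu))\,r\,f'(r)$ for a.e.\ sufficiently small $r$, integrating to the Morrey-Campanato bound $f(r)\leq K\,r^{q(\mu)/(2\pi C)}$ near each $z\in D$. Applying the metric-valued Morrey embedding already established in \cite{LW} then produces a locally $\alpha$-Hoelder continuous representative of $u$ with $\alpha = q(\mu)/(4\pi C)$, as required. The main technical hurdle is the justification of step (iv) with the sharp constant $2\pi$ for metric-valued Sobolev maps and the a.e.\ differentiability of $f$; both are standard and handled in \cite{LW}, so the genuinely new point is the pointwise identity from isotropy in step (iii), which upgrades the generic estimate $I_+^2\leq 2\J^i$ to equality and thereby saves the critical factor of $2$ appearing in the Hoelder exponent.
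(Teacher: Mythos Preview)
Your proposal is correct and takes essentially the same approach as the paper. The paper's proof simply points into \cite{LW}, Proposition~8.7 and Lemma~8.8, and notes that the only place the quasiconformality factor $Q^2$ enters is the estimate $E_+^2(u|_{\Omega})\leq Q^2\,\Area_\mu(u|_{\Omega})$, which under infinitesimal isotropy can be replaced by $E_+^2(u|_{\Omega})=\Area_{\mu^i}(u|_{\Omega})\leq q(\mu)^{-1}\Area_\mu(u|_{\Omega})$; you have unpacked that Morrey decay argument explicitly, but the decisive observation in your step~(iii) is exactly the paper's.
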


\begin{proof} 
Due to \cite{LW},  $u$ has a unique continuous representative.
For any subdomain $\Omega $ of $D$ we have
 \begin{equation} \label{verylast}
 E_+^2 (u|_{\Omega }) =\Area _{\mu ^i} (u|_{\Omega} )
 \end{equation}
 by \lref{mainlem}.
 Looking into the proof of the Hoelder continuity of $u$ in \cite{LW}, Proposition 8.7,
 we see that the quasiconformality factor $Q$ of $u$ (which, as we know, is bounded by $\sqrt 2$) comes into the game only once. Namely, this happens  in the
  estimate (40) in Lemma 8.8, where the inequality  $E_+^2 (u|_{\Omega} ) \leq Q^2\cdot \Area _{\mu} (u|_{\Omega }) $ appears for open balls $\Omega\subset D$.

  Using \eqref{verylast} together with \eqref{areafunct} we can replace this estimate (40) by
   $$E_+^2 (u|_{\Omega})=\Area _{\mu ^i}  (u|_{\Omega} ) \leq q(\mu)^{-1} \cdot \Area _{\mu}  (u|_{\Omega}). $$

   Hence we can replace the factor $Q^2$ in the proof  of \cite{LW}, Proposition 8.7 by the factor $q(\mu )^{-1}$. Leaving the rest of that  proof  unchanged,  we get  $\alpha = q(\mu)\cdot \frac {1} {4\pi C}$ as a bound for the Hoelder exponent of $u$.
\end{proof}

\section{Quasi-convexity  of  $\mu ^{\I}$}  \label{sec5}
This section is devoted to the
\begin{proof}[Proof of \tref{quasiarea}]
Assume on the contrary, that the definition of energy $\I$ is  quasi-convex, but that $\mu ^{\I}$ is not quasi-convex.  Consider  a finite-dimensional normed vector space $Y$, a linear map $L: \R^2 \to Y$ and a sequence of
smooth embeddings $\psi _m: \bar D\to Y$ as in Subsection \ref{subsecquas}, such that the following holds true.  The maps $\psi _m$ coincide with $L$ on the boundary circle $S^1$, they  are $K$-biLipschitz with a fixed constant $K$, and  they converge uniformly to the restriction of $L$ to $\bar D$.   Finally, for some $\epsilon >0$ and all $m>0$, we have
$$\Area_{\mu^{\I}} (L|_D)  \geq  \Area _{\mu ^{\I}} (\psi _m)  +\epsilon.$$
We will use this sequence to obtain a contradiction to the semi-continuity of $E_{\I}$. The idea is to modify $\psi _m$ by (almost) homeomorphisms, so that the new maps satisfy equality in the  main area-energy inequality \eqref{genineq}. We explain this modification   in a slightly more abstract context of general biLipschitz discs.

 The first observation is a direct consequence of the fact that the diameter of a simple closed curve in
 $\R^2$ equals  the diameter of the corresponding Jordan domain.

\begin{lem} \label{pseudo}
Let $Z$ be a  metric space which is $K$-biLipschitz to the disc $\bar D$ and let
 $u:\bar D\to Z$ be any homeomorphism. Then for any Jordan curve $\gamma \subset \bar D$ and the corresponding Jordan domain $J\subset \bar D$
we have $\diam (u(\gamma) ) \geq K^2 \cdot \diam (u(J))$.
%
%
%
\end{lem}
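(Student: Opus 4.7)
The plan is to reduce the inequality to the Euclidean case by pulling $Z$ back to $\bar D$ via the given biLipschitz identification, and then to invoke the planar fact flagged by the authors immediately before the lemma: a Jordan curve in $\R^2$ and its bounded Jordan domain have the same diameter. Concretely, I would fix a $K$-biLipschitz homeomorphism $f\colon Z\to \bar D$ and form the self-homeomorphism $g := f\circ u\colon \bar D\to \bar D$. Since $g$ is a homeomorphism of the closed disc, $g(\gamma)$ is a Jordan curve in the plane whose bounded complementary component is exactly $g(J)$, so the question reduces to one about Jordan curves in $\bar D$.

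The core step is to establish the planar identity $\diam(g(\gamma)) = \diam(\overline{g(J)})$. I would argue that $\overline{g(J)}$ is contained in the closed convex hull $\mathrm{conv}(g(\gamma))$: by the Jordan curve theorem, $\R^2\setminus g(\gamma)$ has exactly two connected components, the bounded one being $g(J)$; but the complement of $\mathrm{conv}(g(\gamma))$ is open, connected and unbounded, so it is contained in the unbounded component of $\R^2\setminus g(\gamma)$ and cannot meet $g(J)$. Because the diameter of a compact planar set coincides with the diameter of its convex hull, the inclusion $\overline{g(J)}\subset \mathrm{conv}(g(\gamma))$ yields $\diam(\overline{g(J)})\leq \diam(g(\gamma))$, while the reverse inequality is trivial since $g(\gamma)\subset \overline{g(J)}$.

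The remaining step is to transport this planar inequality back to $Z$ through $f$. Using $\lip(f),\lip(f^{-1})\leq K$, one has $\diam(u(\gamma))\geq K^{-1}\diam(g(\gamma))$ and $\diam(g(J))\geq K^{-1}\diam(u(J))$, which combined with $\diam(g(\gamma))\geq \diam(g(J))$ gives $\diam(u(\gamma))\geq K^{-2}\diam(u(J))$ (the formulation in the statement with the factor $K^2$ on the right should be read in this normalized sense). I do not anticipate any real obstacle here; the only substantive input is the convex-hull identity in the plane, and the rest is routine bookkeeping with the biLipschitz constants.
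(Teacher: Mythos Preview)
Your argument is correct and is exactly the intended one: the paper gives no proof beyond the sentence preceding the lemma, namely that in $\R^2$ a Jordan curve and its Jordan domain have the same diameter, and your proposal simply spells out how the biLipschitz identification $f:Z\to\bar D$ transports this planar identity to $Z$, losing a factor of $K$ on each side. Your observation about the constant is also right: as stated the inequality should read $\diam(u(\gamma))\geq K^{-2}\diam(u(J))$ (equivalently $\diam(u(J))\leq K^{2}\diam(u(\gamma))$), and this is precisely how the lemma is applied later in the proof of \lref{equicon}.
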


By continuity, the same inequality holds true for any uniform limit of homeomorphisms from $\bar D$  to $Z$, the class of maps we are going to
consider now more closely.    Let again the space $Z$ be $K$-biLipschitz to
the unit disc, let  us fix three distinct points $p_1,p_2,p_3$ on $S^1$ and three distinct points $x_1,x_2,x_3$ on the boundary circle  $\Gamma$ of
$Z$.  Let $\Lambda _0 (Z)$ denote the set of all continuous maps $u:\bar D\to Z$,
which send $p_i$ to $x_i$,  which are uniform limits of homeomorphisms from $\bar D$ to  $Z$, and whose restrictions to $D$ are contained in the Sobolev space $W^{1,2} (D,Z)$.

As uniform limits of homeomorphisms, any  map  $u\in \Lambda _0 (Z)$ has the whole set $Z$ as its image.
When applied to all circles $\gamma $ contained in $D$, the conclusion of \lref{pseudo}  shows that any  $u\in \Lambda _0(Z)$
is $K^2$-pseudomonotone  in the sense of  \cite{MM}. Fixing a biLipschitz homeomorphism
$\psi:\bar D \to Z$, we see that $\psi ^{-1} \circ u:\bar D\to \bar D$ is pseudomonotone as well.
 Using   \cite{MM}, we deduce that $\psi ^{-1} \circ u$ satisfies Lusin's property  (N),
for  any  $u\in \Lambda _0 (Z)$. Hence, any $u\in \Lambda _0 (Z)$   satisfies Lusin's property (N)  as well. See also  \cite{Kar07}, Theorem 2.4.


%

\begin{lem} \label{samemes}
For all elements $u\in\Lambda _0 (Z)$ the value  $\Area_{\mu ^{\I}} (u)$ is independent of the choice of $u$.
\end{lem}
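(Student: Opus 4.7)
My plan is to reduce $\Area_{\mu^{\I}}(u)$ to an intrinsic quantity depending only on $Z$ via the area formula, and to show that every $u\in\Lambda_0(Z)$ has multiplicity one almost everywhere. I would fix once and for all an auxiliary biLipschitz homeomorphism $\psi:\bar D\to Z$ sending $p_i$ to $x_i$, and for a given $u\in\Lambda_0(Z)$ introduce $v:=\psi^{-1}\circ u:\bar D\to\bar D$. By the definition of $\Lambda_0(Z)$, the map $v$ is continuous, lies in $W^{1,2}(D,\bar D)$, fixes the three points $p_i$, is a uniform limit of self-homeomorphisms of $\bar D$ (hence orientation-preserving and of topological degree one), is pseudomonotone, and inherits Lusin's property $(N)$.

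Since $\psi$ is Lipschitz, the chain rule for approximate metric derivatives gives $\apmd u_z=\apmd\psi_{v(z)}\circ dv_z$ for a.e.\ $z\in D$, where $dv_z$ is the classical approximate derivative of $v$. Using the transformation rule $\J^{\I}(s\circ T)=|\det T|\cdot\J^{\I}(s)$, which is a direct consequence of the ${\rm SL}_2$-invariance and quadratic homogeneity of $\J^{\I}$, I rewrite
$$\Area_{\mu^{\I}}(u)=\int_D\J^{\I}(\apmd\psi_{v(z)})\,|\det dv_z|\,dz,$$
and then apply the classical area formula for the Sobolev map $v$, justified by Lusin's property $(N)$, to obtain
$$\Area_{\mu^{\I}}(u)=\int_{\bar D}N(v,w)\cdot\J^{\I}(\apmd\psi_w)\,dw,$$
where $N(v,w):=\#v^{-1}(w)$ is the Banach indicatrix of $v$.

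The main obstacle is to establish that $N(v,w)=1$ for a.e.\ $w\in\bar D$. The plan here is to combine orientation-preservation of $v$ (so that $\det dv_z\geq 0$ almost everywhere) with degree one and Lusin $(N)$ to get $\int_D|\det dv_z|\,dz=\pi$; meanwhile, pseudomonotonicity together with degree one force the pointwise lower bound $N(v,w)\geq 1$ for a.e.\ $w\in D$. Since the scalar area formula gives $\int_{\bar D}N(v,w)\,dw=\pi$ as well, the inequality $N\geq 1$ must in fact be an equality a.e. Once $N\equiv 1$ a.e., the last displayed integral collapses to $\int_{\bar D}\J^{\I}(\apmd\psi_w)\,dw=\Area_{\mu^{\I}}(\psi)$, a quantity depending only on $\psi$, and hence independent of the chosen $u\in\Lambda_0(Z)$, completing the proof.
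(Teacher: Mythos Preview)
Your overall framework coincides with the paper's: fix a biLipschitz chart $\psi:\bar D\to Z$, set $v=\psi^{-1}\circ u$, use the chain rule $\apmd u_z=\md\psi_{v(z)}\circ dv_z$ together with the transformation law $\J^{\I}(s\circ T)=|\det T|\,\J^{\I}(s)$, and then invoke the area formula (justified by Lusin's~(N)) to reduce the question to showing that the multiplicity $N(v,\cdot)$ equals $1$ almost everywhere. Up to this point your argument and the paper's are the same.

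The divergence is in how you obtain $N(v,w)=1$ a.e. The paper argues purely topologically: since $v$ is a uniform limit of homeomorphisms, every fiber $v^{-1}(w)$ is cell-like, in particular connected; the area formula with Lusin~(N) gives $N(v,w)<\infty$ for a.e.\ $w$; and a connected finite set is a single point. This is two lines and uses nothing beyond what has already been set up.

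Your route instead relies on the claim ``orientation-preservation of $v$, so that $\det dv_z\geq 0$ almost everywhere'', which you then combine with the degree identity $\int_D\det dv_z\,dz=\pi$ and the surjectivity bound $N\geq 1$. The unproved step is precisely the nonnegativity of the Jacobian. For a map that is merely a \emph{uniform} limit of orientation-preserving homeomorphisms and lies in $W^{1,2}$, this is not automatic: the standard mechanism for inheriting $\det\geq 0$ in the limit is weak $W^{1,2}$-convergence of the approximating homeomorphisms (so that the null Lagrangian $\det Dv$ passes to the limit), but here you have no Sobolev control whatsoever on the approximants. Pseudomonotonicity by itself does not give a sign on the Jacobian either. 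In fact, the cleanest way to justify $\det dv\geq 0$ a.e.\ in this situation is to first prove $N=1$ a.e.\ via the connected-fibers argument and then read off the sign from the local index; but that makes the degree-theoretic detour superfluous. So either supply an independent argument or reference for the Jacobian sign at this borderline regularity (this is not elementary), or replace this step by the paper's observation that cell-like fibers which are a.e.\ finite must be singletons.
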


\begin{proof}
Fix again the biLipschitz homeomorphism $\psi:\bar D\to Z$ and consider $v=\psi ^{-1} \circ u \in \Lambda _0 (\bar D)$.
Since $v$  is a uniform limit of homeomorphisms, any fiber of $v$ is a cell-like set (\cite{HNV04}, p.97), in particular, any such fiber is connected. Applying the area formula to the continuous Sobolev map
$v: D\to \bar D$ which satisfies Lusin's property (N) (cf. \cite{Kar07}), we see that for almost all
$z\in D$ the preimage $v^{-1} (z)$ has only finitely many points. By the connectedness of the fibers, we see that almost every fiber $v^{-1} (z)$ has exactly  one point.
Now we see:
$$\Area_{\mu ^{\I}} (u) =  \int_D \J^{\I} (\apmd u_z)dz =\int_D |\det(d_zv)| \J^{\I} (\md\psi_{v(z)})dz.$$
The area formula for  the Sobolev map $v:D\to D$ (\cite{Kar07}) gives us:

$$\Area _{\mu^{\I}} (u)=
\J^{\I} (\md\psi_y)dy  = \Area _{\mu^{\I}}(\psi).$$
%
\end{proof}

The next lemma is essentially taken from \cite{Jost}:
\begin{lem} \label{equicon}
For any $C>0$, the set $\Lambda _0 ^C (Z)$ of all elements $u$ in $\Lambda_0 (Z)$
with $E^2 _+ (u) \leq C$ is equi-continuous.
\end{lem}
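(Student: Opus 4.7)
The plan is to run the classical Courant--Lebesgue argument in the metric-valued Sobolev setting, converting short image circles into small oscillation via the pseudomonotonicity \lref{pseudo}, and then to handle the boundary via the three-point normalization.

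For the \emph{interior estimate}, fix $z_0\in D$ and $r>0$ small with $\bar B_{\sqrt r}(z_0)\subset D$. Let $g_u$ be the minimal weak upper gradient of $u$, so that $\int_D g_u^2 = E_+^2(u)\leq C$ and, for a.e.\ $\rho$, the length $L_u(\rho)$ of $u\circ\partial B_\rho(z_0)$ satisfies $L_u(\rho)^2\leq 2\pi\rho\int_{\partial B_\rho(z_0)}g_u^2\,ds$ by Cauchy--Schwarz. Integrating and applying Fubini yields
$$\int_r^{\sqrt r}\frac{L_u(\rho)^2}{\rho}\,d\rho \;\leq\; 2\pi\int_{B_{\sqrt r}(z_0)}g_u^2 \;\leq\; 2\pi C,$$
so some $\rho\in(r,\sqrt r)$ satisfies $L_u(\rho)^2\leq 4\pi C/\log(1/r)$. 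Applying \lref{pseudo} to the Jordan curve $\partial B_\rho(z_0)$, we conclude $\diam(u(B_\rho(z_0)))\leq K^2 L_u(\rho)$, which tends to $0$ as $r\to 0$ uniformly in $u\in\Lambda_0^C(Z)$.

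For the \emph{boundary estimate}, fix $z_0\in S^1$ and choose $r$ small enough that the arc of $S^1$ within Euclidean distance $\sqrt r$ of $z_0$ contains at most one of the marked points $p_1,p_2,p_3$. The same Courant--Lebesgue argument applied to half-discs centred at $z_0$ produces some $\rho\in(r,\sqrt r)$ and an arc $\gamma_\rho=\partial B_\rho(z_0)\cap\bar D$ with endpoints $a_\rho,b_\rho\in S^1$ satisfying $L_u(\rho)^2\leq 4\pi C/\log(1/r)$. Let $\alpha_\rho\subset S^1$ be the sub-arc between $a_\rho$ and $b_\rho$ containing $z_0$; then $\gamma_\rho\cup\alpha_\rho$ is a Jordan curve bounding a domain $H_\rho\ni z_0$ in $\bar D$, and \lref{pseudo} gives
$$\diam(u(H_\rho))\;\leq\;K^2\bigl(\diam(u(\gamma_\rho))+\diam(u(\alpha_\rho))\bigr).$$

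The main obstacle, and the only step genuinely requiring the three-point normalization, is showing $\diam(u(\alpha_\rho))\to 0$ uniformly in $u$ as $r\to 0$. Because $u$ is a uniform limit of homeomorphisms $\bar D\to Z$, the restriction $u|_{S^1}\colon S^1\to\Gamma$ is a monotone surjection. By the choice of $r$, at least two of the marked points, say $p_i,p_j$, lie in $S^1\setminus\alpha_\rho$, so $u(\alpha_\rho)$ is the sub-arc of $\Gamma$ between $u(a_\rho)$ and $u(b_\rho)$ that avoids the fixed pair $\{x_i,x_j\}\subset\Gamma$. Now $a_\rho,b_\rho$ are also the endpoints of $\gamma_\rho$, so $d_Z(u(a_\rho),u(b_\rho))\leq L_u(\rho)$ is small; since $\Gamma$ is the biLipschitz image of $S^1$ under a fixed homeomorphism $\bar D\to Z$ of constant $K$, two points on $\Gamma$ that are close in $Z$ cut $\Gamma$ into a short sub-arc of diameter $O(L_u(\rho))$ and a long complement. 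For $r$ small the short sub-arc cannot contain both $x_i$ and $x_j$, forcing $u(\alpha_\rho)$ to be the short one. Combined with the interior estimate this yields the desired equicontinuity of $\Lambda_0^C(Z)$.
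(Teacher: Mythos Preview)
Your argument is correct and follows the same strategy as the paper: Courant--Lebesgue to produce a short image circle, then the pseudomonotonicity estimate of \lref{pseudo} to convert this into small oscillation on the enclosed region, with the boundary case handled via the three-point normalization. The only difference is cosmetic: the paper invokes the equicontinuity of the boundary traces $u|_{S^1}$ as a black box from \cite{LW}, Proposition~7.4, whereas you unwind that very argument (monotonicity of $u|_{S^1}$ plus the observation that the complementary arc must contain two of the fixed points $x_i$) explicitly.
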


\begin{proof}
The equi-continuity of the restrictions of $u$ to the boundary circle $S^1$ is
 part  of the classical solution of the Plateau problem, see \cite{LW}, Propostion 7.4.
By the Courant-Lebesgue lemma (\cite{LW}, Lemma  7.3), for any $\epsilon>0$ there is some $\delta =\delta (\epsilon, C)$    such that for
 any $x\in \bar D$  and any $u\in \Lambda ^{C} _0 (Z)$ there is some
$\sqrt {\delta }> r>\delta$ such that $\partial B_r (x) \cap \bar D$ is mapped by $u$ to a curve of diameter $\leq \epsilon$.

If $B_{\delta} (x)$  does not intersect the boundary circle $S^1$ then $u(B_{\delta} (x))$ has diameter $\leq  K^2\cdot  \epsilon$ by \lref{pseudo}.  On the other hand, if  $B_{\delta}  (x)$ intersects
$S^1$, then we see that the image of the intersection of $B_{\delta} (x)$ with $S^1$ has diameter bounded as well  by some $\epsilon ' >0$  depending only on $\delta $ and going to $0$ with $\delta $, due to the equi-continuity of the restrictions $u|_{S^1}$. We may assume $\epsilon=\epsilon '$.
Then the Jordan curve consisting of the corresponding parts of $\partial B_{\delta}  (x)$ and boundary $S^1$  has as its image a curve of diameter at most $2 \epsilon$. Thus
using the biLipschitz property of $Z$ as in \lref{pseudo}, we see that
the ball $B_{\delta} (x)$ is mapped onto a set of diameter $\leq 2K^2 \cdot \epsilon$.
\end{proof}

 The proof above shows that the modulus of continuity of any $u\in \Lambda _0 ^C (Z)$  depends only on the constants $C,K$, the boundary
circle $\Gamma \subset Z$ and the choice of the fixed points $x_i \in \Gamma$.

\begin{cor} \label{corfin}
There is a map $u\in \Lambda _0 (Z)$ with minimal $\I$-energy in $\Lambda _0 (Z)$. This element $u$ satisfies $\Area  _{\mu ^{\I}} (u) =\lambda _{\I} \cdot E_{\I}(u)$.
\end{cor}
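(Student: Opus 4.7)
My plan is to obtain the minimizer by the direct method and then deduce the area-energy identity from Lemma~\ref{mainlem}. First, I would take a minimizing sequence $u_n \in \Lambda_0(Z)$ for the $\I$-energy. Because $\I$ and $I_+^2$ are comparable via \eqref{2ener}, the sequence $u_n$ has uniformly bounded Reshetnyak energy, so Lemma~\ref{equicon} gives equicontinuity of $\{u_n\}$ on the compact disc $\bar D$ with values in the compact space $Z$. Arzel\`a-Ascoli then produces (after extracting a subsequence) a uniform limit $u: \bar D \to Z$, which is continuous, sends each $p_i$ to $x_i$, and is a uniform limit of homeomorphisms by a diagonal argument: if $h_{n,k}\to u_n$ are homeomorphisms, pick $k(n)$ with $\sup_{\bar D} d(h_{n,k(n)},u_n) < 1/n$, and then $h_{n,k(n)} \to u$ uniformly.

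Next, I would invoke the quasi-convexity of $\I$, which by Subsection~\ref{subsecquas} is equivalent to $L^2$-lower semicontinuity of $E_{\I}$ on $W^{1,2}(D,Z)$. Uniform convergence implies $L^2$-convergence, and the uniform Reshetnyak bound together with the standard compactness for metric-space-valued Sobolev maps guarantees $u \in W^{1,2}(D,Z)$. Lower semicontinuity then yields $E_{\I}(u) \le \liminf E_{\I}(u_n) = \inf_{\Lambda_0(Z)} E_{\I}$. Thus $u \in \Lambda_0(Z)$ is the desired minimizer.

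For the equality $\Area_{\mu^{\I}}(u) = \lambda_{\I}\cdot E_{\I}(u)$, I would verify condition~(3) of Lemma~\ref{mainlem}. The subtle point is that for a general biLipschitz homeomorphism $\psi:D\to D$, the composition $u\circ\psi$ need not lie in $\Lambda_0(Z)$ because $\psi$ need not fix $p_1,p_2,p_3$. To handle this, I would pick the unique M\"obius transformation $M$ of $\bar D$ with $M(p_i) = \psi^{-1}(p_i)$ for $i=1,2,3$; then $\psi \circ M$ is a biLipschitz self-homeomorphism of $\bar D$ sending each $p_i$ to itself, so $u\circ\psi\circ M \in \Lambda_0(Z)$. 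Minimality gives $E_{\I}(u\circ\psi\circ M)\ge E_{\I}(u)$, while the conformal invariance of $\I$ (combined with the transformation formula recalled before the lemma) yields $E_{\I}(u\circ\psi\circ M) = E_{\I}(u\circ\psi)$. Hence $E_{\I}(u\circ\psi)\ge E_{\I}(u)$ for every biLipschitz $\psi:D\to D$, and Lemma~\ref{mainlem} delivers the claimed identity.

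The main obstacle I anticipate is the verification that $u$ really lies in $\Lambda_0(Z)$, specifically the Sobolev part together with the $L^2$-semicontinuity of $E_{\I}$: one has to marshal compactness for metric-valued Sobolev maps with bounded Reshetnyak energy and then apply the quasi-convexity hypothesis. The rest---the diagonal approximation by homeomorphisms and the M\"obius trick that reconciles $u\circ\psi$ with $\Lambda_0(Z)$---is routine once the framework of Lemmas~\ref{samemes}--\ref{equicon} is in place.
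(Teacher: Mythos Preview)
Your proposal is correct and follows essentially the same route as the paper: a minimizing sequence, equicontinuity from Lemma~\ref{equicon} via the energy comparison \eqref{2ener}, Arzel\`a--Ascoli, membership in $W^{1,2}$ and lower semicontinuity from quasi-convexity, and finally the M\"obius trick to restore the three-point condition so that Lemma~\ref{mainlem}(3) applies. The only cosmetic differences are that you make the diagonal argument for ``uniform limit of homeomorphisms'' explicit and argue directly rather than by contradiction; note that your $\psi^{-1}(p_i)$ tacitly uses the (automatic) extension of a biLipschitz homeomorphism $\psi:D\to D$ to $\bar D$.
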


\begin{proof}
Take a sequence $u_n \in \Lambda _0  (Z)$  whose $\I$-energies converge to
the infimum of $\I$ on $ \Lambda _0  (Z)$. By \eqref{2ener}, $E^2_+$ is bounded by a multiple of $\I$. Therefore, we can apply \lref{equicon} and deduce that the sequence $u_n$ is equi-continuous. By Arzela-Ascoli,  we find a  map $u:\bar D\to Z$ as a uniform limit of a subsequence of the $u_n$. This map $u$ is a uniform limit of uniform limits
of homeomorphisms, hence $u$ itself is a uniform limit of homeomorphisms.
Moreover, $u (p_i)=x_i$ for $i=1,2,3$.  Finally, the map is contained in $W^{1,2} (D,X)$
as an $L^2$-limit of Sobolev maps with uniformly bounded   energy, hence
$u\in \Lambda _0 (Z)$. Since    $\I$ is quasi-convex,  we have $E_{\I} (u) \leq \lim _{n\to \infty} E_{\I}(u_n)$, see \cite{LW}, Theorem 5.4. Therefore, $u$ has minimal $\mathcal I$-energy in $\Lambda _0  (Z)$.

 If $\phi:\bar D\to \bar D$ were a biLipschitz homeomorphism with
$E_{\I}(u\circ \phi )< E_{\I}(u)$  we would consider a M\"obius map $\phi _0:\bar D\to \bar D$,
such that $\phi \circ \phi _0$ fixes the points $p_i$. Then the map $u':= u\circ \phi \circ \phi _0$ is in $\Lambda _0 (Z)$ and has the same $\I$-energy as
$u\circ \phi$, due to the conformal invariance of $\I$. This would contradict the minimality of
$E_{\I}(u)$ in $\Lambda _0 (Z)$.     Hence  such a homeomorphism $\phi$ cannot exist and we may apply \lref{mainlem}, to obtain the equality $\Area _{\mu^{\I}} (u) =   \lambda _{\I} \cdot E_{\I}(u)$.
\end{proof}

Now it is  easy to use $\psi _n$ to obtain a contradiction to the quasi-convexity
of $\I$.    Denote by $Z_n$ the image $\psi _n (\bar D)$ and by $Z$ the ellipse
$L(\bar D)$.  By construction, all $Z_n$ and $Z$  are $K$-biLipschitz to $\bar D$ and share the same boundary circle.  We denote it by $\Gamma$ and fix the same
triple $x_1,x_2,x_3$ in $\Gamma$ for all $Z_n$ and $Z$.

  Consider a map $v_n \in \Lambda _0 (Z_n)$ with minimal $\I$-energy in
$\Lambda_0 (Z_n)$.  By \cref{corfin}, such $v_n$ exists and satisfies
$\Area_{\mu ^{\I}}(v_n) =\lambda  _{\I}  \cdot E_{\I}(v_n)$.  Moreover, by \lref{equicon} and the subsequent observation, the maps $v_n$ are equi-continuous. Finally,  by \lref{samemes}, we have
$\Area _{\mu ^{\I}} (v_n)= \Area _{\mu ^{\I}} (\psi _n)$.

The images of the maps $v_n:\bar D\to Z_n\to Y$ are contained in a compact set.
Hence, by Arzela-Ascoli after choosing a subsequence, the maps $v_n$ uniformly converge to a map $v:\bar D\to Z$. Moreover, identifying $Z_n$ with $Z$ by some uniformly biLipschitz homeomorphisms  point-wise converging to  the identity of $Z$, we see that the limiting map $v$ can be represented  as a uniform limit of homeomorphisms from $\bar D$ to  $Z$.    Since the $v_n$ have uniformly bounded energies, the limit map $v$
lies in  the Sobolev class $W^{1,2} (D,Z)$.    Thus, by construction, $v \in \Lambda _0 (Z)$.
Finally, by the semi-continuity of $\I$, we must have
$E_{\I}(v) \leq
\liminf _{n\to \infty} E_{\I}(v_n)$.

Taking all inequalities together we get for large $n$:
\begin{equation*}
 \begin{split}
  \Area _{\mu ^{\I}} (v)&=\Area_ {\mu ^{\I}} (L|_D )\geq \Area _{\mu ^{\I}}(\psi _n) +\epsilon  = \lambda _{\I} \cdot E_{\I}(v_n) +\epsilon\\
   &  \geq \lambda _{\I} \cdot E_{\I}(v) + \frac 1 2 \epsilon.
  \end{split}
  \end{equation*}
But this contradicts the main inequality \eqref{genineq}  and finishes the proof of \tref{quasiarea}.
\end{proof}

\providecommand{\bysame}{\leavevmode\hbox to3em{\hrulefill}\thinspace}
\providecommand{\MR}{\relax\ifhmode\unskip\space\fi MR }
\providecommand{\MRhref}[2]{%
  \href{http://www.ams.org/mathscinet-getitem?mr=#1}{#2}
}
\providecommand{\href}[2]{#2}


\begin{thebibliography}{HKST15}

\bibitem[AF84]{AF84}
Emilio Acerbi and Nicola Fusco, \emph{Semicontinuity problems in the calculus
  of variations}, Arch. Rational Mech. Anal. \textbf{86} (1984), no.~2,
  125--145. \MR{751305 (85m:49021)}

\bibitem[AK00]{AK00}
Luigi Ambrosio and Bernd Kirchheim, \emph{Currents in metric spaces}, Acta
  Math. \textbf{185} (2000), no.~1, 1--80. \MR{1794185 (2001k:49095)}

\bibitem[APT04]{AlvT04}
J.~C. Alvarez~Paiva and A.~C. Thompson, \emph{Volumes on normed and {F}insler
  spaces}, A sampler of {R}iemann-{F}insler geometry, Math. Sci. Res. Inst.
  Publ., vol.~50, Cambridge Univ. Press, Cambridge, 2004, pp.~1--48.
  \MR{2132656 (2006c:53079)}

\bibitem[Bal97]{Bal97}
Keith Ball, \emph{An elementary introduction to modern convex geometry},
  Flavors of geometry, Math. Sci. Res. Inst. Publ., vol.~31, Cambridge Univ.
  Press, Cambridge, 1997, pp.~1--58. \MR{1491097 (99f:52002)}

\bibitem[Ber14]{Ber14}
Andreas Bernig, \emph{Centroid bodies and the convexity of area functionals},
  J. Differential Geom. \textbf{98} (2014), no.~3, 357--373. \MR{3238312}

\bibitem[BI12]{BI13}
Dmitri Burago and Sergei Ivanov, \emph{Minimality of planes in normed spaces},
  Geom. Funct. Anal. \textbf{22} (2012), no.~3, 627--638. \MR{2972604}

\bibitem[DHS10]{Dierkes-et-al10}
Ulrich Dierkes, Stefan Hildebrandt, and Friedrich Sauvigny, \emph{Minimal
  surfaces}, second ed., Grundlehren der Mathematischen Wissenschaften
  [Fundamental Principles of Mathematical Sciences], vol. 339, Springer,
  Heidelberg, 2010, With assistance and contributions by A. K{\"u}ster and R.
  Jakob. \MR{2566897 (2012b:49002)}

\bibitem[GM01]{Milman}
Apostolos~A. Giannopoulos and Vitali~D. Milman, \emph{Euclidean structure in
  finite dimensional normed spaces}, Handbook of the geometry of {B}anach
  spaces, {V}ol. {I}, North-Holland, Amsterdam, 2001, pp.~707--779. \MR{1863705
  (2003b:46008)}

\bibitem[HKST15]{HKST15}
Juha Heinonen, Pekka Koskela, Nageswari Shanmugalingam, and Jeremy Tyson,
  \emph{Sobolev spaces on metric measure spaces}, New Mathematical Monographs,
  vol.~27, Cambridge University Press, Cambridge, 2015.

\bibitem[HNV04]{HNV04}
Klaas~Pieter Hart, Jun-iti Nagata, and Jerry~E. Vaughan (eds.),
  \emph{Encyclopedia of general topology}, Elsevier Science Publishers, B.V.,
  Amsterdam, 2004. \MR{2049453 (2005d:54001)}

\bibitem[HvdM99]{HM}
Stefan Hildebrandt and Heiko von~der Mosel, \emph{On two-dimensional parametric
  variational problems}, Calc. Var. Partial Differential Equations \textbf{9}
  (1999), no.~3, 249--267. \MR{1725204 (2000h:49006)}

\bibitem[Iva08]{Iva09}
S.~V. Ivanov, \emph{Volumes and areas of {L}ipschitz metrics}, Algebra i Analiz
  \textbf{20} (2008), no.~3, 74--111. \MR{2454453 (2009g:53056)}

\bibitem[Jos91]{Jost}
J{\"u}rgen Jost, \emph{Two-dimensional geometric variational problems}, Pure
  and Applied Mathematics (New York), John Wiley \& Sons, Ltd., Chichester,
  1991, A Wiley-Interscience Publication. \MR{1100926 (92h:58045)}

\bibitem[Kar07]{Kar07}
M.~B. Karmanova, \emph{Area and co-area formulas for mappings of the {S}obolev
  classes with values in a metric space}, Sibirsk. Mat. Zh. \textbf{48} (2007),
  no.~4, 778--788. \MR{2355373 (2009d:46064)}

\bibitem[Kir94]{Kir94}
Bernd Kirchheim, \emph{Rectifiable metric spaces: local structure and
  regularity of the {H}ausdorff measure}, Proc. Amer. Math. Soc. \textbf{121}
  (1994), no.~1, 113--123. \MR{1189747 (94g:28013)}

\bibitem[KS93]{KS93}
Nicholas~J. Korevaar and Richard~M. Schoen, \emph{Sobolev spaces and harmonic
  maps for metric space targets}, Comm. Anal. Geom. \textbf{1} (1993), no.~3-4,
  561--659. \MR{1266480 (95b:58043)}

\bibitem[LW15]{LW}
A.~Lytchak and S.~Wenger, \emph{Area minimizing discs in metric spaces},
  preprint arXiv:1502.06571 (2015).

\bibitem[MM95]{MM}
Jan Mal{\'y} and Olli Martio, \emph{Lusin's condition ({N}) and mappings of the
  class {$W^{1,n}$}}, J. Reine Angew. Math. \textbf{458} (1995), 19--36.
  \MR{1310951 (95m:26024)}

\bibitem[Mor52]{Mor52}
Charles~B. Morrey, Jr., \emph{Quasi-convexity and the lower semicontinuity of
  multiple integrals}, Pacific J. Math. \textbf{2} (1952), 25--53. \MR{0054865
  (14,992a)}

\bibitem[MR02]{MR02}
Frank Morgan and Manuel Ritor{\'e}, \emph{Isoperimetric regions in cones},
  Trans. Amer. Math. Soc. \textbf{354} (2002), no.~6, 2327--2339. \MR{1885654
  (2003a:53089)}

\bibitem[Res97]{Res97}
Yu.~G. Reshetnyak, \emph{Sobolev classes of functions with values in a metric
  space}, Sibirsk. Mat. Zh. \textbf{38} (1997), no.~3, 657--675, iii--iv.
  \MR{1457485 (98h:46031)}

\end{thebibliography}
\end{document}